%
%
%
\documentclass{amsproc}
\usepackage{amssymb}
\newtheorem{theorem}{Theorem}[section]
\newtheorem{lemma}[theorem]{Lemma}
\newtheorem{corollary}[theorem]{Corollary}

\theoremstyle{definition}
\newtheorem{definition}[theorem]{Definition}
\newtheorem{example}[theorem]{Example}

\usepackage{float}
\usepackage{tikz}
\usetikzlibrary{graphs, graphs.standard}
\usetikzlibrary{decorations.markings}

\usepackage{color}

   \usepackage[normalem]{ulem}

\theoremstyle{remark}
\newtheorem{remark}[theorem]{Remark}

\numberwithin{equation}{section}



\begin{document}

\title{Inverse $Z$-matrices with the bi-diagonal south-west structure}

\author{S. Pratihar}
\address{Department of Mathematics, Indian Institute of 
		Technology Madras,
		Chennai, 600036, India }
\email{ma22d019@smail.iitm.ac.in (Samapti Pratihar)}
\thanks{The authors thank the referee for comments that have improved the readability. The first author was supported by the Prime Minister's Research Fellowship (PMRF), Ministry of Education, Government of India}

\author{K.C. Sivakumar}
\address{Department of Mathematics, Indian Institute of 
		Technology Madras,
		Chennai, 600036, India }
\email{kcskumar@iitm.ac.in (K.C. Sivakumar)}

\subjclass[2020]{15A09, 15B48, 05C20}
\date{}


\keywords{Digraph, $Z$-matrix, $M$-matrix, inverse $M$-matrix, $N$-matrix, inverse $N$-matrix}

\begin{abstract}
Two new matrix classes are introduced; inverse cyclic matrices and bi-diagonal south-west matrices. An interesting relation is established between these classes. Applications to two classes of inverse $Z$-matrices are provided.
\end{abstract}

\maketitle

\section{Introduction}

\subsection{Inverse $Z$-matrices: a brief survey}

The objective of this article is twofold. One is to present a brief survey on the class of inverse $Z$-matrices, with specific reference to its nonnegativity/nonpositivity properties. The second aim is to propose two matrix classes, motivated by a result on inverse $M$-matrices, and to understand their relationship. We start with a short survey of the literature on inverse $Z$-matrices.  

The set of real square matrices of order $n$ is denoted by $\mathbf{M}_n \mathbb{(R)}$. We refer to a matrix each of whose entry is nonzero as a {\it full matrix}. For any matrix $A$, we use $A \geq 0$ to denote the fact that all the entries of $A$ are nonnegative; $A > 0 ~(A < 0)$ will signify that all the entries of $A$ are positive (negative). The determinant of $A$ is denoted by $\det A$. For $\alpha , \beta \subseteq \{1,2,\dotsc,n\}$, whose elements are in the ascending order, we use $A\begin{bmatrix}
    \alpha\\
    \beta
\end{bmatrix}$ to denote the submatrix of $A$ containing the rows and columns indexed by $\alpha$ and $\beta$, respectively. $A[\alpha]$ is referred to as a principal submatrix of $A$. If $\alpha \subsetneq \{1,2, \ldots,n\}$, then $A[\alpha]$ will be called a {\it proper} principal submatrix.  Finally, $A(\alpha)$ denotes the principal submatrix in the rows and columns defined by the complement $\alpha'$ of $\alpha$, in $\{1,2,\dotsc,n\}$. We use $\rho(A)$ to denote the spectral radius of $A,$ which by definition is the maximum of the modulii of the eigenvalues of $A$.

We shall be concerned with the notion of irreducibility of matrices, which we recall, next.

\begin{definition}
For a matrix $A = (a_{i j}) \in \mathbf{M}_n \mathbb{(R)}$, the digraph (directed graph) $\mathcal{D}(A)$ of $A$, has $\{v_1,v_2,\dotsc, v_n\}$ as the vertex set, and whose edge set consists of those ordered pairs $(v_i,v_j),$ if the corresponding entry $a_{i j} \neq 0$.
\end{definition}

Note that the above definition allows the digraph to have loops. A (directed) path in $\mathcal{D}(A)$ from vertex $v_i$ to vertex $v_j$, is a set of distinct vertices $\{v_i, v_{i_1}, \dots, v_{i_r},v_j\}$ such that $(v_i, v_{i_1}), (v_{i_1},v_{i_2}), \dotsc, (v_{i_r}, v_{j})$ are (directed) edges. A matrix $A$ is called {\it irreducible} if its associated digraph $\mathcal{D}(A)$ is strongly connected, i.e., there is at least one directed path between any two vertices in $\mathcal{D}(A)$. 

We shall be interested in the following class of sign pattern matrices. A matrix $A=(a_{ij}) \in \mathbf{M}_n(\mathbb{R})$ is said to be a {\it $Z$-matrix} if $a_{ij} \leq 0$, for all $i \neq j$ (so that all the off-diagonal entries of $A$ are nonpositive). Any such matrix $A$ can be represented as $$A = tI- B, \text{ where } t \in \mathbb{R} \text{ and } B \geq 0.$$ Denote for $r=1,2,\ldots,n$,
$$\rho_r(B):=max \{\rho(\hat{B}): \hat{B}\; \text{is a principal submatrix of}\; B \; \text{of order} \; r\},$$ with the convention that $\rho_0(B):=-\infty$ and $\rho_{n+1}(B):=\infty$.  
The following classification of $Z$-matrices was introduced in \cite{fiedlermark}. Let $L_s(s=0,1,2,\ldots,n)$ denote the class of real $n\times n$ matrices having the form $$A=tI-B,\; \text{where} 
     \; B\geq 0 \;\text{and}\; \rho_s(B)\leq t < \rho_{s+1}(B).$$
A nonsingular matrix $A$ is called an {\it inverse $Z$-matrix}, if $A^{-1}$ is a $Z$-matrix. In order to understand the behaviour of those inverse $Z$-matrices that will be considered here, first we recall the corresponding classes of $Z$-matrices. 

First, we recall the notion of an $M$-matrix. Consider a $Z$-matrix represented as above. If, in addition, one has $t \geq \rho(B)$, then $A$ is called an {\it $M$-matrix}. Such a matrix $A$ is invertible, if $t> \rho(B)$. In that case, it is well known that $A^{-1} \geq 0.$ There is a variety of results that identify when a $Z$-matrix is an $M$-matrix. In the book \cite{Berman} more than fifty characterizations are presented. Two of them are recalled here.

\begin{theorem} \cite[Theorem 2.3]{Berman} \label{m}
Let $A$ be any $Z$-matrix. Then the following are equivalent: 
\begin{itemize}
    \item[1.] $A$ is a nonsingular $M$-matrix; 
    \item[2.] all the principal minors of $A$ are positive; 
    \item[3.]$A^{-1}$ exists and $A^{-1} \geq 0$.
\end{itemize}
\end{theorem}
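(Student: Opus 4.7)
The plan is to establish the three equivalences through the cycle $(1) \Rightarrow (3) \Rightarrow (1)$ together with $(1) \Leftrightarrow (2)$, relying on the Neumann series for a nonsingular $M$-matrix on the one hand, and on the Perron--Frobenius theorem applied to the nonnegative part on the other.

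For $(1) \Rightarrow (3)$ I would write $A = tI - B = t(I - t^{-1}B)$ with $\rho(t^{-1}B) < 1$, so that the Neumann series $(I - t^{-1}B)^{-1} = \sum_{k=0}^{\infty}(t^{-1}B)^{k}$ converges termwise to a nonnegative matrix, yielding $A^{-1} = t^{-1}(I - t^{-1}B)^{-1} \geq 0$. For the converse $(3) \Rightarrow (1)$, Perron--Frobenius furnishes $v \geq 0$, $v \neq 0$ with $Bv = \rho(B)v$, hence $Av = (t - \rho(B))v$. If $t < \rho(B)$, premultiplying by $A^{-1} \geq 0$ gives $v = (t - \rho(B))A^{-1}v$, placing a nonnegative nonzero vector equal to a nonpositive one, a contradiction; the borderline case $t = \rho(B)$ is excluded because it forces $Av = 0$ and thus singularity of $A$.

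For $(1) \Rightarrow (2)$, every principal submatrix $A[\alpha] = tI - B[\alpha]$ satisfies $B[\alpha] \geq 0$, and the well-known monotonicity of the spectral radius under principal submatrix extraction yields $\rho(B[\alpha]) \leq \rho(B) < t$, so each $A[\alpha]$ is again a nonsingular $M$-matrix. Its determinant equals $\prod_i(t - \mu_i)$, where the $\mu_i$ are the eigenvalues of $B[\alpha]$; each real $\mu_i$ gives a positive factor since $|\mu_i| \leq \rho(B[\alpha]) < t$, and complex conjugate pairs contribute positive real products. Hence $\det A[\alpha] > 0$.

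The main obstacle is the converse $(2) \Rightarrow (1)$, which must bridge a purely combinatorial hypothesis on minors and the spectral conclusion $t > \rho(B)$. For this I would invoke the identity $\det(\lambda I - A) = \sum_{k=0}^{n}(-1)^{k} c_{k} \lambda^{n-k}$ with $c_{0}=1$ and $c_{k}$ the sum of the $k\times k$ principal minors of $A$. Evaluating at $\lambda = -\mu$ for $\mu \geq 0$ produces $(-1)^{n}\sum_{k=0}^{n} c_{k}\mu^{n-k}$, a quantity of constant sign and strictly nonzero whenever every $c_{k}>0$; consequently $A$ has no real eigenvalue in $(-\infty, 0]$. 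Since Perron--Frobenius again exhibits $t - \rho(B)$ as an explicit real eigenvalue of $A$, it must be strictly positive, which is precisely the statement that $A$ is a nonsingular $M$-matrix. The characteristic-polynomial identity linking principal minors to elementary symmetric functions of the spectrum is the essential device here, and is what makes this last step work.
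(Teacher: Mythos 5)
Your proof is correct. Note that the paper does not prove this statement at all: it is quoted verbatim from Berman and Plemmons \cite[Theorem~2.3]{Berman} as one of the many classical characterizations of nonsingular $M$-matrices, so there is no in-paper argument to compare against. Your four steps are the standard textbook ones and they all go through: the Neumann series $A^{-1}=t^{-1}\sum_{k\ge 0}(t^{-1}B)^k\ge 0$ (valid since $t>\rho(B)\ge 0$ forces $t>0$); the Perron vector argument ruling out $t\le\rho(B)$ when $A^{-1}\ge 0$; monotonicity of the spectral radius on principal submatrices of $B\ge 0$ together with pairing of complex conjugate eigenvalues to get positive principal minors; and the identity $\det(\lambda I-A)=\sum_{k=0}^n(-1)^k c_k\lambda^{n-k}$ evaluated at $\lambda=-\mu$, $\mu\ge 0$, to exclude eigenvalues of $A$ in $(-\infty,0]$, which combined with the fact that $t-\rho(B)$ is a real eigenvalue of $A$ closes the cycle. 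One point worth making explicit if you write this up: the quantity $t-\rho(B)$ does not depend on the chosen representation $A=tI-B$ with $B\ge 0$ (any two such representations differ by a shift $B\mapsto B+(s-t)I$, under which $\rho$ shifts by exactly $s-t$ because $\rho$ is itself an eigenvalue of a nonnegative matrix), so the conclusion ``$A$ is a nonsingular $M$-matrix'' is representation-independent, as the definition requires.
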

It follows that the diagonal entries of any nonsingular $M$-matrix are {\it positive}. For the case of irreducible matrices, there is something more to say.

\begin{theorem}\cite[Thorem 2.7]{Berman} 
     Let $A$ be any $Z$-matrix. Then $A$ is a nonsingular irreducible $M$-matrix if and only if $A^{-1} >0$.
\end{theorem}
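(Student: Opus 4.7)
My plan is to prove the two implications separately, using the Neumann series for $A^{-1}$ in the forward direction and a reducibility argument in the converse.

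For the forward direction, I assume $A$ is a nonsingular irreducible $M$-matrix and write $A = tI - B$ with $B \geq 0$ and $t > \rho(B)$. The first step is to observe that the off-diagonal sparsity pattern of $B$ coincides with that of $-A$, so $\mathcal{D}(A)$ and $\mathcal{D}(B)$ have the same off-diagonal edges; thus the irreducibility of $A$ transfers to $B$. Since $\rho(B/t) < 1$, the Neumann series converges and gives
\[
A^{-1} = \frac{1}{t}\sum_{k=0}^{\infty}\left(\frac{B}{t}\right)^{k}.
\]
Because every term is nonnegative, it suffices to show that for each pair $(i,j)$ some power $(B^{k})_{ij}$ is strictly positive. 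For $i=j$ this is immediate from $k=0$. For $i\neq j$, the irreducibility of $\mathcal{D}(A)$ produces a directed path from $v_i$ to $v_j$ with distinct vertices, which uses only off-diagonal edges and therefore corresponds to a walk in $\mathcal{D}(B)$ of the same length $k$; this forces $(B^{k})_{ij}>0$. Summing, $A^{-1}>0$.

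For the converse, assume $A$ is a $Z$-matrix with $A^{-1}>0$. Then in particular $A^{-1}\geq 0$, and Theorem \ref{m} applies to yield that $A$ is a nonsingular $M$-matrix. It remains to show $A$ is irreducible. Suppose, to the contrary, that $A$ is reducible. Then there exists a permutation matrix $P$ and nontrivial blocks giving
\[
PAP^{T}=\begin{pmatrix} A_{11} & 0 \\ A_{21} & A_{22}\end{pmatrix},
\]
with $A_{11}, A_{22}$ square and nonsingular. A routine block inversion yields
\[
(PAP^{T})^{-1}=\begin{pmatrix} A_{11}^{-1} & 0 \\ -A_{22}^{-1}A_{21}A_{11}^{-1} & A_{22}^{-1}\end{pmatrix},
\]
which contains a zero block in the upper-right corner. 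Conjugating back by $P$ permutes these zeros into $A^{-1}$, contradicting $A^{-1}>0$.

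The only delicate point is the transfer of irreducibility between $A$ and $B$ and the ensuing combinatorial argument that a path in $\mathcal{D}(A)$ (which by definition involves distinct vertices, hence off-diagonal edges) yields a walk in $\mathcal{D}(B)$ with strictly positive entry in the corresponding power of $B$. Everything else is either the Neumann series expansion or a standard block-triangular inversion, so I do not expect any substantive obstacle beyond this bookkeeping.
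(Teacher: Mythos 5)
Your proof is correct. The paper does not prove this statement at all --- it is quoted verbatim from Berman and Plemmons \cite[Theorem 2.7]{Berman} --- so there is no in-paper argument to compare against; your two-sided argument (Neumann series $A^{-1}=\frac{1}{t}\sum_{k\ge 0}(B/t)^k$ plus the observation that irreducibility depends only on the off-diagonal pattern shared by $A$ and $B$ for the forward direction, and Theorem \ref{m} together with the block-triangular form of a reducible matrix and its inverse for the converse) is the standard proof of this classical fact, and all the steps, including the reduction of a path in $\mathcal{D}(A)$ to a positive entry of $(B^k)_{ij}$ and the nonsingularity of the diagonal blocks $A_{11},A_{22}$, check out.
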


The other classes of $Z$-matrices are obtained if the number $t$ in the representation $A=tI-B$, for $B \geq 0$, lies to the left of $\rho(B).$ Let us recall one such class, viz., $N$-matrices. Once again, let $A$ be given as above. If $t$ satisfies the inequalities $\rho_{n-1}(B) < t < \rho(B),$ for $n \geq 2$ then $A$ is referred to as an {\it $N$-matrix}. This matrix class was introduced and investigated in \cite{Fan}. The determinant of an $N$-matrix is negative and each proper principal submatrix is an invertible $M$-matrix (so that all its diagonal entries are positive). While an invertible $M$-matrix is inverse nonnegative, an invertible $N$-matrix is inverse negative. In particular, it follows that an $N$-matrix is irreducible. We record this result. 

\begin{theorem}\cite[Corollary 2.8]{Johnson N} \label{j}
Let $A$ be a $Z$-matrix. Then $A$ is a $N$-matrix if and only if $A^{-1}<0$.
\end{theorem}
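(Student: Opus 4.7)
My plan is to prove both directions directly. For the forward implication, with $A = tI - B$, $B \geq 0$ and $\rho_{n-1}(B) < t < \rho(B)$, I would first establish that $A$ (hence $B$) is irreducible: if $A$ were reducible, block triangularity would make some proper principal minor coincide with $\det A$, but $\det A < 0$ while every proper principal submatrix is an $M$-matrix with positive determinant. I would then study the family $A_s = sI - B$ for $s \geq \rho_{n-1}(B)$. Since $\frac{d}{ds}\det(sI - B) = \sum_{i}\det((sI - B)(\{i\}))$, and each summand is the determinant of a nonsingular $M$-matrix once $s > \rho_{n-1}(B) \geq \rho(B(\{i\}))$, $\det A_s$ is strictly increasing on $(\rho_{n-1}(B), \infty)$; combined with $\det A_s = 0$ at $s = \rho(B)$, this yields $\det A_s < 0$ on $(\rho_{n-1}(B), \rho(B))$.

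Next I would analyze the adjugate. For $s > \rho(B)$, $\operatorname{adj}(A_s) = (\det A_s)A_s^{-1} > 0$ since $A_s$ is an irreducible nonsingular $M$-matrix. At $s = \rho(B)$, $\rho(B)$ is a simple eigenvalue, so $A_{\rho(B)}$ has rank $n-1$ and $\operatorname{adj}(A_{\rho(B)})$ is a rank-one matrix of the form $uv^T$ with $u, v > 0$ the right and left Perron vectors of $B$, the correct sign of the scalar factor following from continuity as $s \downarrow \rho(B)$. To extend $\operatorname{adj}(A_s) > 0$ across the whole interval $(\rho_{n-1}(B), \rho(B))$, suppose otherwise and let $s^\star$ be the supremum of points where some entry of $\operatorname{adj}(A_s)$ is nonpositive. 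By continuity $\operatorname{adj}(A_{s^\star}) \geq 0$ with some entry $(i_0, j_0)$ equal to zero; the diagonal entries $\det((s^\star I - B)(\{i\}))$ are positive by the monotonicity above, so $i_0 \neq j_0$. Reading off entry $(i_0, j_0)$ of $A_{s^\star}\operatorname{adj}(A_{s^\star}) = \det(A_{s^\star})I$ gives $\sum_k B_{i_0 k}(\operatorname{adj}(A_{s^\star}))_{k j_0} = 0$. Setting $S = \{k : (\operatorname{adj}(A_{s^\star}))_{k j_0} = 0\}$, we have $i_0 \in S$, $j_0 \notin S$, and iterating this observation for each $k \in S$ yields $B_{kl} = 0$ whenever $k \in S$ and $l \notin S$, contradicting the irreducibility of $B$. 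Hence $\operatorname{adj}(A_s) > 0$ throughout, so $A^{-1} = \operatorname{adj}(A)/\det A < 0$.

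For the converse, let $A$ be a $Z$-matrix with $A^{-1} < 0$, written $A = tI - B$ with $B \geq 0$. Irreducibility of $A$ (and $B$) is immediate, as a block-triangular form of $A$ would produce a zero off-diagonal block in $A^{-1}$. If $t \geq \rho(B)$, then $A$ would be a (possibly singular) $M$-matrix, either contradicting invertibility or forcing $A^{-1} \geq 0$; hence $t < \rho(B)$. To show $t > \rho_{n-1}(B)$, suppose $\rho(B(\{i\})) \geq t$ for some $i$ and pick an irreducible diagonal block $B[\gamma]$ of $B(\{i\})$ with $\rho(B[\gamma]) = \rho(B(\{i\}))$ and positive Perron vector $w$; extend $w$ to $\tilde w \in \mathbb{R}^n$ by zero-padding, so $\tilde w_i = 0$. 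A direct computation shows $A \tilde w \leq 0$ componentwise, and $A\tilde w \neq 0$ because $A$ is invertible. Then $\tilde w = A^{-1}(A \tilde w)$, and the strict negativity of $A^{-1}$ combined with at least one strictly negative coordinate of $A \tilde w$ forces $\tilde w > 0$ strictly, contradicting $\tilde w_i = 0$.

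The main obstacle is the propagation step in the forward direction: positivity of $\operatorname{adj}(A_s)$ at the boundary $s = \rho(B)$ drops out of Perron-Frobenius, but extending it throughout $(\rho_{n-1}(B), \rho(B))$ requires exploiting the combinatorial content of $B$'s irreducibility via the sign analysis of $A_{s^\star}\operatorname{adj}(A_{s^\star}) = \det(A_{s^\star}) I$.
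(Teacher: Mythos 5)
The paper does not prove this statement at all: it is quoted verbatim as a known result, citing Corollary~2.8 of G.~A.~Johnson's paper on generalized $N$-matrices, so there is no internal proof to compare against. Your argument is, as far as I can check, a correct and self-contained proof. The forward direction via Jacobi's formula $\frac{d}{ds}\det(sI-B)=\operatorname{tr}\operatorname{adj}(sI-B)$ to get $\det A_s<0$ on $(\rho_{n-1}(B),\rho(B))$, followed by the support-set argument showing that a zero off-diagonal entry of a nonnegative $\operatorname{adj}(A_{s^\star})$ would force a vanishing off-diagonal block of $B$, is sound; the converse via the zero-padded Perron vector of a dominant irreducible block of $B(\{i\})$ and the strict negativity of $A^{-1}$ is also correct, and is in the spirit of how such results are usually obtained in the literature (the classical route, going back to Ky~Fan and Johnson, instead leans on the already-established $N_0$-characterization and cofactor identities $(A^{-1})_{ii}=\det A(i)/\det A$). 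Two small presentational points: (i) your irreducibility step invokes $\det A<0$ before you derive it, so you should reorder (the monotonicity argument for $\det A_s$ does not use irreducibility, so there is no actual circularity); and (ii) reducibility makes $\det A$ a \emph{product} of two proper principal minors, not equal to one of them --- the contradiction with $\det A<0$ survives, but the phrasing should be fixed.
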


If equality is allowed in the strict inequality in the definition of an $N$-matrix i.e., in the given representation of $A$, if $t$ satisfies the inequalities $\rho_{n-1}(B) \leq  t < \rho(B)$, then $A$ is referred to as an {\it $N_0$-matrix}. This matrix class was investigated in \cite{Johnson N} and a number of interesting properties were proved. Clearly, every $N$-matrix is an $N_0$-matrix and that the latter matrix class is the topological closure of the former. Interestingly, analogous to the property of any matrix belonging to the class of $N$-matrices, we have the following result: A matrix $A$ is an $N_0$-matrix precisely if $A$ is a $Z$ matrix with the property that all proper principal submatrices are (not necessarily invertible) $M$-matrices, and the determinant of $A$ is negative \cite[Lemma 2.1]{Johnson N}. Here is another result: If $A$ is a $Z$-matrix, then $A$ is an $N_0$-matrix if and only if $A^{-1} \leq 0$ and is irreducible \cite[Theorem 2.7]{Johnson N}. Note that a nonsingular matrix is irreducible if and only if its inverse is irreducible. 

We turn our attention to yet another class which has received considerable attention. A matrix $A\in \mathbf{M}_n(\mathbb{R})$ with $n \geq 3$, is an {\it $F_0$-matrix} if $A=tI-B,$ with $B\geq 0$ and $\rho_{n-2}(B)\leq t < \rho_{n-1}(B)$. An equivalent manner in which this class is defined is given by the following. Let $A\in \mathbf{M}_n(\mathbb{R})$ be a $Z$-matrix. Then $A$ is an $F_0$-matrix if and only if, it satisfies the following conditions:\\
(1) all principal submatrices of order at most $n-2$ are $M$-matrices;\\
(2) at least one principal submatrix of order $n-1$ is an $N_0$-matrix.\\
While there is no nonpositivity or nonnegativity result that is true, in general, for $F_0$-matrices, we have:

\begin{theorem}\cite[Theorem 2.4]{chen}
Let $A$ be a $Z$-matrix. Then $A$ is an $F_0$-matrix if and only if:\\
(1) $det(A) <0;$\\
(2) all principal minors of $A^{-1}$ of order at least two are nonpositive;\\
(3) at least one diagonal entry of $A^{-1}$ is positive.
\end{theorem}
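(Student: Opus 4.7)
The central tool is Jacobi's identity on principal minors of the inverse: for any nonsingular $A\in\mathbf{M}_n(\mathbb{R})$ and any $\alpha\subsetneq\{1,\dotsc,n\}$,
\[
\det\bigl(A^{-1}[\alpha]\bigr)\;=\;\frac{\det\bigl(A(\alpha)\bigr)}{\det(A)}\;=\;\frac{\det\bigl(A[\alpha']\bigr)}{\det(A)}.
\]
This bridges the principal minors of $A$, which the equivalent definition of $F_0$ recalled in the excerpt pins down through $M$- and $N_0$-submatrix conditions, with those of $A^{-1}$ appearing in (1)--(3).

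\textbf{Forward direction.} Suppose $A$ is an $F_0$-matrix and fix an index set $\alpha_0$ with $|\alpha_0|=n-1$ such that $A[\alpha_0]$ is an $N_0$-matrix, so $\det(A[\alpha_0])<0$. For (1), I would permute $\alpha_0$ to the first $n-1$ indices and invoke the Schur-complement identity $\det(A)=\det(A[\alpha_0])\cdot\bigl(a_{ii}-u^{T}A[\alpha_0]^{-1}v\bigr)$ whenever $A[\alpha_0]$ is invertible (an $N$-matrix), where $\{i\}=\alpha_0'$ and $u,v\leq 0$ are the off-diagonal row and column of $A$ attached to index $i$. Since $a_{ii}=t-b_{ii}\geq 0$ (as $b_{ii}\leq\rho_{n-2}(B)\leq t$) and Theorem \ref{j} gives $A[\alpha_0]^{-1}<0$, a sign analysis produces $u^{T}A[\alpha_0]^{-1}v\leq 0$, so the Schur complement is $\geq a_{ii}\geq 0$ and $\det(A)\leq 0$. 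Strict negativity---the crux---comes from ruling out a vanishing Schur complement: this would force $a_{ii}=0$ and either $u\equiv 0$ or $v\equiv 0$, producing a zero row or column in $A$, which cannot coexist with $t<\rho_{n-1}(B)$ together with the $M$-matrix condition on $(n-2)$-submatrices. Any singular $N_0$ subcase is handled by a continuity/perturbation argument that keeps $A$ in $F_0$. Given (1), condition (2) follows at once: if $|\alpha|\geq 2$ then $|\alpha'|\leq n-2$, so $A[\alpha']$ is an $M$-matrix with $\det(A[\alpha'])\geq 0$ by Theorem \ref{m}, whence $\det(A^{-1}[\alpha])\leq 0$. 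Specializing to $\alpha=\alpha_0$, $\{i\}=\alpha_0'$ gives $(A^{-1})_{ii}=\det(A[\alpha_0])/\det(A)>0$, which is (3).

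\textbf{Reverse direction.} Conversely, assume (1)--(3). For $|\alpha|\leq n-2$ take $\beta=\alpha'$, so $|\beta|\geq 2$; Jacobi together with (2) gives $\det(A[\alpha])=\det(A)\,\det(A^{-1}[\beta])\geq 0$ (a product of two nonpositive quantities, with $\det(A)<0$). Applying this to every principal submatrix of $A$ of order $\leq n-2$, and then to every further principal submatrix of $A[\alpha]$ (still of order $\leq n-2$), all principal minors of $A[\alpha]$ are nonnegative; the standard singular-case extension of Theorem \ref{m} identifies the $Z$-matrix $A[\alpha]$ as an $M$-matrix. Next, (3) provides $i$ with $(A^{-1})_{ii}>0$, whence Jacobi yields $\det(A[\{i\}'])<0$; the previous step ensures every proper principal submatrix of the $Z$-matrix $A[\{i\}']$ is an $M$-matrix, so by the characterization in \cite[Lemma 2.1]{Johnson N}, $A[\{i\}']$ is an $N_0$-matrix. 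Combined, these verify the equivalent definition of $F_0$. The main obstacle throughout is the strict inequality $\det(A)<0$ in (1), where ruling out degenerate block configurations requires the perturbation step indicated above.
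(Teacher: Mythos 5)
This statement is quoted from \cite{chen} as background; the paper supplies no proof of it, so there is nothing internal to compare your argument against, and it has to be judged on its own. Your reverse direction is sound: Jacobi's identity together with (1) and (2) gives nonnegativity of all principal minors of every order-$(\leq n-2)$ principal submatrix, hence the $M$-matrix property for $Z$-matrices, and (3) plus \cite[Lemma 2.1]{Johnson N} produces the required $N_0$ principal submatrix of order $n-1$. Likewise, once $\det A<0$ is known, your deductions of (2) and (3) via Jacobi are correct.

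The genuine gap is exactly where you locate "the crux'': the strict inequality $\det A<0$. Two things go wrong. First, your strictness argument needs $A[\alpha_0]^{-1}<0$, but Theorem \ref{j} gives that only for $N$-matrices; the equivalent definition of $F_0$ only guarantees an $N_0$ principal submatrix of order $n-1$, for which one has at best $A[\alpha_0]^{-1}\leq 0$ (and even that requires irreducibility), so $u^{T}A[\alpha_0]^{-1}v=0$ does not force $u\equiv 0$ or $v\equiv 0$. (Your worry about a "singular $N_0$ subcase'' is vacuous --- $N_0$-matrices have negative determinant, hence are invertible --- but that is not the real difficulty.) Second, the asserted contradiction from a zero row or column is not a contradiction. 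Take
$A=\begin{pmatrix} 0 & 0 & 0\\ -1 & 0 & -1\\ -1 & -1 & 0\end{pmatrix}=0\cdot I-B$ with $B=-A\geq 0$: here $\rho_1(B)=0\leq t=0<\rho_2(B)=1$, so $A$ lies in $L_{n-2}$, all $1\times 1$ principal submatrices are (singular) $M$-matrices, $A[\{2,3\}]$ is an $N_0$-matrix, and yet $\det A=0$. So the configuration you propose to rule out by "a continuity/perturbation argument'' actually occurs, and no perturbation argument can close this; the conclusion $\det A<0$ requires an additional hypothesis (nonsingularity or an irreducibility assumption as in Chen's own setting) that neither the quoted statement nor your proof makes explicit. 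As written, the forward direction does not go through.
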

For more details, we refer the reader to \cite{chen} and \cite{Johnson N}. Recalling the definition of $L_s$, it now clear that, for $s=n$, we obtain the $M$-matrix class, the $N_0$-matrix class corresponds to $s=n-1$, while for $s=n-2$, it is the $F_0$-matrix class that one obtains.

Next, we revisit a matrix class, historically important, due to its relevance to inverse $Z$-matrices. Let real numbers $a_1,a_2, \ldots, a_n$ be such that $a_n > a_{n-1} > \ldots > a_1.$ A matrix $A:=(a_{ij})$ is called a {\it matrix of type $D$}, if 
\begin{equation}
 a_{ij} = \begin{cases}
  a_i, & ~{\textit if} ~i \leq j,\\
  a_j, & {\textit{otherwise}}.
  \end{cases} \\
\end{equation}

A matrix of type $D$ of order $n \times n$ will be denoted by $D_n.$ For instance, 
  $$D_4=
  \begin{pmatrix}
   a_1 & a_1 & a_1 & a_1\\
   a_1 & a_2 & a_2 & a_2\\
   a_1 & a_2 & a_3 & a_3\\
   a_1 & a_2 & a_3 & a_4
  \end{pmatrix}.$$
We also have the following recurrence relation:
$$D_{n+1}=
  \begin{pmatrix}
      D_n & b \\
      b^T & a_{n+1}
  \end{pmatrix},$$ where $b=(a_1,a_2, \ldots, a_n)^T.$ Type $D$ matrices were introduced in \cite{markham}, where the first item below was proved. 
  
\begin{theorem}
Let $A\in \mathbf{M}_n(\mathbb{R})$ be a type $D$-matrix.
\begin{enumerate}
    \item If $a_1 >0,$ then $A$ is an inverse $M$-matrix and $A^{-1}$ is tridiagonal \cite[Theorem 2.3]{markham}. 
    \item If $a_n <0,$ then $A$ is an  inverse $N_0$-matrix such that $A^{-1}$ is tridiagonal \cite[Theorem 2.4]{gjohn2}.
    \item  If $a_{n}>0$ and $a_{n-1}<0,$ then $A^{-1}$ is a tridiagonal $F_0$-matrix \cite[Theorem 4.21]{Samir}.   
\end{enumerate}
\end{theorem}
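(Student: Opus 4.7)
The plan is to derive an explicit tridiagonal formula for $D_n^{-1}$ by induction on $n$ and then read off each of the three conclusions from the resulting sign pattern, invoking the characterization theorems already recorded in the survey. The crucial observation is that the last column of $D_n$ is exactly the vector $b=(a_1,\ldots,a_n)^T$ of the block recurrence; equivalently $D_n e_n=b$, so $D_n^{-1}b=e_n$, the Schur complement simplifies to $a_{n+1}-b^T D_n^{-1}b = a_{n+1}-a_n =: d_n>0$, and by symmetry of $D_n$ one also has $b^T D_n^{-1}=e_n^T$.

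Substituting these identities into the block-inversion formula gives
\[
D_{n+1}^{-1}=\begin{pmatrix} D_n^{-1}+\tfrac{1}{d_n}\,e_n e_n^T & -\tfrac{1}{d_n}\,e_n\\[3pt] -\tfrac{1}{d_n}\,e_n^T & \tfrac{1}{d_n}\end{pmatrix}.
\]
Starting from $D_1^{-1}=(1/a_1)$ and inducting on $n$, one sees that $D_n^{-1}$ is tridiagonal with off-diagonal entries $(D_n^{-1})_{i,i+1}=(D_n^{-1})_{i+1,i}=-1/d_i<0$ and diagonal entries $\tfrac{1}{a_1}+\tfrac{1}{d_1},\;\tfrac{1}{d_1}+\tfrac{1}{d_2},\,\ldots,\,\tfrac{1}{d_{n-2}}+\tfrac{1}{d_{n-1}},\;\tfrac{1}{d_{n-1}}$. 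In particular $D_n^{-1}$ is a tridiagonal $Z$-matrix in every case of the theorem.

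For (1), $a_1>0$ forces $a_i>0$ for every $i$ and hence $D_n\geq 0$; Theorem \ref{m} then declares the $Z$-matrix $D_n^{-1}$ a nonsingular $M$-matrix. For (2), $a_n<0$ forces $D_n<0$; being a full matrix, $D_n$ has a complete digraph with loops, so both $D_n$ and $D_n^{-1}$ are irreducible, and the characterization \cite[Theorem 2.7]{Johnson N} recalled in the survey identifies $D_n^{-1}$ as an $N_0$-matrix.

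For (3), a routine row reduction yields $\det D_n[\alpha]=a_{i_1}\prod_{j=1}^{k-1}(a_{i_{j+1}}-a_{i_j})$ for any principal submatrix with indices $\alpha=\{i_1<\cdots<i_k\}$, since $D_n[\alpha]$ is again of type $D$. When $|\alpha|\geq 2$ one necessarily has $i_1\leq n-1$, so $a_{i_1}\leq a_{n-1}<0$ and the minor is negative; in particular $\det D_n^{-1}=1/\det D_n<0$, while $a_n>0$ is a positive diagonal entry of $(D_n^{-1})^{-1}=D_n$. All three conditions of \cite[Theorem 2.4]{chen} are thus satisfied, so $D_n^{-1}$ is an $F_0$-matrix. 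The only real obstacle is the inductive derivation of the explicit tridiagonal form for $D_n^{-1}$; once that is in hand, the three sign-pattern arguments are brief.
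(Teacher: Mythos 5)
The paper offers no proof of this statement at all: it is quoted as a survey result, with each item attributed to an external reference ([markham], [gjohn2], [Samir]), so there is no internal argument to compare yours against. Judged on its own, your proof is correct and self-contained. The key identity $D_n e_n = b$, hence $D_n^{-1}b=e_n$ and Schur complement $a_{n+1}-a_n=d_n>0$, is right (note that $\det D_k = a_1\prod_{j=2}^{k}(a_j-a_{j-1})\neq 0$ in all three cases since $a_1\neq 0$, so every leading principal submatrix is invertible and the induction is legitimate), and the resulting tridiagonal symmetric $Z$-matrix form of $D_n^{-1}$, with off-diagonal entries $-1/d_i<0$, is exactly the structure established in the sources (it is essentially Markham's original computation, and it also underlies Theorem \ref{Tridiagonal Z matrix}). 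The three sign-pattern deductions are then correctly routed through the characterizations the paper records: Theorem \ref{m} for item (1) via $D_n\geq 0$; the $N_0$-characterization $A^{-1}\leq 0$ plus irreducibility for item (2), where irreducibility is clear since $D_n<0$ is full (or directly from the nonzero off-diagonals of the tridiagonal inverse); and the three conditions of the quoted $F_0$-characterization for item (3), using the principal-minor formula $\det D_n[\alpha]=a_{i_1}\prod_{j}(a_{i_{j+1}}-a_{i_j})$, which is valid because every principal submatrix of a type $D$ matrix is again of type $D$. The only cosmetic caveats: in item (3) the $F_0$ class is only defined for $n\geq 3$, which you should state as a standing assumption, and in item (2) you could have concluded the stronger $N$-matrix property directly from Theorem \ref{j} since $D_n<0$, which makes the irreducibility discussion unnecessary.
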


The results above, obtained for three special classes of inverse $Z$-matrices, were extended to the full class $L_s$, which we recall. We set $L_{-1}:=L_n.$

\begin{theorem}\cite[Theorem 3.12]{nabben1}\label{Tridiagonal Z matrix}
    Let $A\in \mathbf{M}_n(\mathbb{R})$ be a matrix of type $D$ with $a_1\neq 0$. Let $s$ denote the number of nonpositive parameters in the sequence $a_n >a_{n-1}>\dotsb>a_1$. Then $A^{-1}$ is a tridiagonal $Z$-matrix and $A^{-1} \in L_{s-1}$. 
 \end{theorem}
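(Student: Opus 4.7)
The plan is to compute $A^{-1}$ by a congruence transformation, recast the $L_{s-1}$ condition as sign constraints on principal minors, and then apply Jacobi's identity to read off those signs from the parameters.

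Set $d_1 := a_1$ and $d_i := a_i - a_{i-1}$ for $i \geq 2$; by the hypotheses $d_1 \neq 0$ and $d_i > 0$ for $i \geq 2$. Applying simultaneously the operations $R_i \mapsto R_i - R_{i-1}$ and $C_j \mapsto C_j - C_{j-1}$ for $i,j = n, n-1, \ldots, 2$ produces $E A E^T = \mathrm{diag}(d_1, \ldots, d_n)$, where $E$ is unit lower bidiagonal with $-1$'s on the subdiagonal. Inverting, $A^{-1} = E^T\,\mathrm{diag}(d_1^{-1}, \ldots, d_n^{-1})\,E$; a short multiplication shows this is symmetric tridiagonal with off-diagonal entries $(A^{-1})_{i,i+1} = -1/d_{i+1} < 0$, so $A^{-1}$ is a $Z$-matrix. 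The same decomposition applied to an arbitrary principal submatrix $A[\alpha]$, which is itself of type $D$ in the parameters $\{a_i : i \in \alpha\}$, gives $\det A[\alpha] = a_{i_1}\prod_{k \geq 2}(a_{i_k} - a_{i_{k-1}})$ with $i_1 := \min \alpha$, so $\mathrm{sgn}(\det A[\alpha]) = \mathrm{sgn}(a_{i_1})$.

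Next I would establish the reformulation: a $Z$-matrix $T$ lies in $L_k$ iff every principal minor of $T$ of order $\leq k$ is nonnegative and some principal minor of order $k+1$ is strictly negative. One direction uses the standard characterization that a $Z$-matrix is a (possibly singular) $M$-matrix iff all its principal minors are nonnegative, so an order-$(k+1)$ submatrix that fails to be an $M$-matrix while having $M$-matrix proper principal submatrices can only do so via its own determinant being negative; the other direction is immediate. Combined with Jacobi's identity $\det A^{-1}[\alpha] = \det A[\alpha']/\det A$ and the sign formula above, one gets
$$\mathrm{sgn}(\det A^{-1}[\alpha]) \;=\; \mathrm{sgn}(a_{\min \alpha'}) \cdot \mathrm{sgn}(a_1),$$
with the convention $\mathrm{sgn}(a_{\min \emptyset}) := +1$ so that the case $\alpha = \{1,\ldots,n\}$ is also covered.

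The remainder is a case analysis on $s$. If $s = 0$ every $a_j$ is positive, the sign above is always $+1$, so $A^{-1}$ is an $M$-matrix and hence lies in $L_n = L_{-1}$. If $s \geq 1$ then $a_1 < 0$; whenever $|\alpha| \leq s-1$ the complement $\alpha'$ must meet $\{1, \ldots, s\}$, so $a_{\min \alpha'} \leq 0$ and the corresponding minor is $\geq 0$, whereas $\alpha = \{1, \ldots, s\}$ forces $\min \alpha' \geq s+1$ and hence $a_{\min \alpha'} > 0$, producing a strictly negative order-$s$ minor. The reformulation then gives $A^{-1} \in L_{s-1}$. The main obstacle I foresee is verifying the principal-minor reformulation itself; the rest amounts to bookkeeping with the explicit inverse and Jacobi's identity.
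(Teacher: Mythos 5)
This theorem is quoted in the paper from \cite[Theorem 3.12]{nabben1} without any proof being given, so there is no in-paper argument to compare against; judged on its own terms, your proof is correct and essentially complete. The congruence $EAE^{T}=\mathrm{diag}(d_1,\dots,d_n)$ with $E$ unit lower bidiagonal does hold for a type $D$ matrix (and shows $\det A=a_1\prod_{i\ge 2}(a_i-a_{i-1})\neq 0$, so invertibility comes for free), the resulting $A^{-1}=E^{T}\mathrm{diag}(d_i^{-1})E$ is symmetric tridiagonal with off-diagonal entries $-1/d_{i+1}<0$, and since every principal submatrix $A[\alpha]$ is again of type $D$ in the parameters $\{a_i:i\in\alpha\}$, the sign formula $\mathrm{sgn}\det A[\alpha]=\mathrm{sgn}(a_{\min\alpha})$ and the Jacobi identity give exactly the sign pattern of the principal minors of $A^{-1}$ that you state. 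The one load-bearing ingredient you should make explicit rather than gesture at is the characterization that a $Z$-matrix $tI-B$ satisfies $t\ge\rho(B)$ if and only if all its principal minors are nonnegative (this is in Berman--Plemmons, which the paper already cites); together with the monotonicity $\rho_r(B)\le\rho_{r+1}(B)$ it yields your reformulation of $L_k$, including the point that an order-$(k+1)$ principal submatrix whose proper principal minors are all nonnegative fails to be an $M$-matrix precisely when its determinant is negative. The case analysis on $s$ is then routine and correct, including the boundary cases $s=0$ (where $L_{-1}:=L_n$ and all minors are positive) and $s=n$ (where the negative minor is $\det A^{-1}$ itself), and the convention $\mathrm{sgn}(a_{\min\emptyset})=+1$ handles $\alpha=\{1,\dots,n\}$ consistently. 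Compared with Nabben's original treatment, your route through the explicit bidiagonal factorization and the minor-sign bookkeeping is pleasantly self-contained and also reproves, as special cases, the $M$-, $N_0$- and $F_0$-statements the paper lists afterwards.
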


In the next result, we obtain specific consequences, presented explicitly, for the three classes: $N$-matrices, $N_0$-matrices and $F_0$-matrices. It appears that these have not been noticed before (see the para after Theorem 3.12, \cite{nabben1}). We skip their proofs. Note that the number of nonpositive parameters in the first and the second items is $n$, while in the third instance, this number is $n-1.$ Observe that the second item below, strengthens the first item above.

\begin{theorem}
    Let $A\in \mathbf{M}_n(\mathbb{R})$ be a type $D$-matrix. 
    \begin{enumerate}
        \item If $a_n <0$, then $A^{-1}$ is a tridiagonal $N$-matrix (and hence a tridiagonal $N_0$-matrix). 
        \item If $a_n=0$, then $A^{-1}$ is a tridiagonal $N_0$-matrix, which is not an $N$-matrix. 
        \item If $a_{n-1}=0$, then $A^{-1}$ is a tridiagonal $F_0$-matrix. 
    \end{enumerate}
\end{theorem}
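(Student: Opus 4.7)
The plan is to apply Theorem \ref{Tridiagonal Z matrix} in each case, after counting the number $s$ of nonpositive parameters in the strictly increasing sequence $a_1 < a_2 < \dotsb < a_n$, so as to locate $A^{-1}$ in the corresponding class $L_{s-1}$. The refinement from $N_0$ to $N$ in case (1), and its failure in case (2), are then obtained from Theorem \ref{j}.

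For case (1), the assumption $a_n < 0$ combined with strict monotonicity forces every $a_i$ to be negative, so $s = n$. Theorem \ref{Tridiagonal Z matrix} gives that $A^{-1}$ is tridiagonal and lies in $L_{n-1}$, the $N_0$-matrix class. Every entry of $A$ equals some $a_i$, hence $A < 0$ strictly. Since $A^{-1}$ is a $Z$-matrix whose inverse is strictly negative, Theorem \ref{j} applied to $A^{-1}$ upgrades the conclusion: $A^{-1}$ is in fact a tridiagonal $N$-matrix.

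For case (2), $a_n = 0$ together with strict monotonicity means $a_1, \dotsc, a_{n-1} < 0$ and $a_n = 0$, so again $s = n$ and Theorem \ref{Tridiagonal Z matrix} places $A^{-1}$ in $L_{n-1}$, i.e., $A^{-1}$ is a tridiagonal $N_0$-matrix. The $(n,n)$ entry of $A$ is $a_n = 0$, so $A \not< 0$, and the converse direction of Theorem \ref{j} rules out $A^{-1}$ being an $N$-matrix. For case (3), the hypothesis $a_{n-1} = 0 < a_n$ together with strict ordering forces exactly the $n-1$ parameters $a_1, \dotsc, a_{n-1}$ to be nonpositive, so $s = n-1$. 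Then Theorem \ref{Tridiagonal Z matrix} yields $A^{-1} \in L_{n-2}$, which, by the remark immediately following that theorem, is the $F_0$-matrix class; combined with tridiagonality, $A^{-1}$ is a tridiagonal $F_0$-matrix.

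There is no real obstacle: once the count $s$ is identified in each case, the conclusions follow immediately from Theorem \ref{Tridiagonal Z matrix}, with the $N$-versus-$N_0$ distinction in cases (1) and (2) handled cleanly by Theorem \ref{j}. The only point requiring care is keeping the direction of Theorem \ref{j} straight—its comparison is of $(A^{-1})^{-1} = A$ to $0$, not of $A^{-1}$ itself, and under case (1) the type $D$ matrix $A$ is genuinely a strictly negative matrix, which is exactly what makes the upgrade from $N_0$ to $N$ possible.
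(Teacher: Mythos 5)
Your proof is correct and follows exactly the route the paper intends: the paper skips the proof but points to the key step by noting that the number $s$ of nonpositive parameters is $n$ in cases (1) and (2) and $n-1$ in case (3), after which Theorem \ref{Tridiagonal Z matrix} places $A^{-1}$ in $L_{n-1}$ (the $N_0$-class) or $L_{n-2}$ (the $F_0$-class). Your use of Theorem \ref{j}, applied to the $Z$-matrix $A^{-1}$, to upgrade $N_0$ to $N$ when $A<0$ strictly, and to rule out $N$ when $a_{nn}=a_n=0$, is the natural and correct way to settle the remaining distinction.
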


\subsection{Objective}

In this article, we prove a result each for inverse $M$-matrices and inverse $N$-matrices. While the inverse $M$-matrix result is the same as what is stated in Theorem \ref{motive} to follow, we prove it as a consequence of the main result of this article. The result for inverse $N$-matrices is new. It is pertinent to note that the inverse $M$-matrix problem is to characterize those nonsingular nonnegative matrices that are inverses of $M$-matrices. While a number of matrix classes have been shown to be inverse $M$-matrices, the problem in its full generality remains open. We refer the reader to the recent monograph \cite{jts} for a comprehensive treatment.

Consider the following two results.
\begin{theorem}\cite[Theorem 4.3]{McDonald unipathic}\label{motive}
    Let $A =(a_{ij}) \in \mathbf{M}_{n}(\mathbb{R})$. Then the following statements are equivalent: \begin{enumerate}
        \item $A$ is an inverse $M$-matrix such that $\mathcal{D}(A^{-1})$ is the simple $n$-cycle $v_1 \rightarrow v_2 \rightarrow \dots \rightarrow v_n \rightarrow v_1$ (with loops).
        \item $A >0$ and the following hold: 
        \begin{enumerate}
            \item $a_{11}a_{22} \dotsb a_{nn} > a_{12}a_{23} \dotsb a_{(n-1)n}a_{n1}$,
            \item $a_{ij}=\dfrac{a_{ik}a_{kj}}{a_{kk}},$ for all distinct vertices $v_i,v_j,v_k$ such that $v_k$ lies on the path from $v_i$ to $v_j$.
        \end{enumerate}
    \end{enumerate}
\end{theorem}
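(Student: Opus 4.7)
The plan is to prove both implications by exploiting the cyclic structure of $A^{-1}$: in one direction, computing $A=(A^{-1})^{-1}$ explicitly via a Neumann series, and in the other, reverse-engineering a bi-diagonal south-west matrix and verifying it equals $A^{-1}$.

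For $(1)\Rightarrow(2)$: under the hypothesis $A^{-1}=D-C$, where $D=\mathrm{diag}(d_1,\ldots,d_n)$ with $d_i>0$ and $C\geq 0$ has $c_i>0$ only at the cyclic positions $(i,i+1\bmod n)$. Factoring $A^{-1}=D(I-P)$ with $P=D^{-1}C$, a weighted cyclic shift, one computes $P^n=\alpha I$ where $\alpha:=\prod_i(c_i/d_i)$, and nonsingularity of the $M$-matrix $A^{-1}$ forces $\alpha<1$. The Neumann series then collapses to
\[
(I-P)^{-1} \;=\; \frac{1}{1-\alpha}\sum_{k=0}^{n-1}P^k,
\]
yielding the closed form
\[
a_{ij} \;=\; \frac{1}{(1-\alpha)\, d_j}\prod_{\ell=i}^{j-1}\frac{c_\ell}{d_\ell}\qquad(\text{indices cyclic, empty product }=1).
\]
From this $A>0$ is immediate, property (a) reduces after clearing the common $(1-\alpha)^n(\prod d_i)^2$ to the inequality $\prod_i d_i>\prod_i c_i$ (i.e.\ $\alpha<1$), and (b) follows because the cyclic products along the sub-paths $i\to k$ and $k\to j$ concatenate precisely to the product along $i\to j$.

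For $(2)\Rightarrow(1)$: iterate (b) along the long arc $v_{i+1}\to v_{i+2}\to\cdots\to v_i$ to obtain
\[
\frac{a_{i,i+1}\,a_{i+1,i}}{a_{ii}\,a_{i+1,i+1}} \;=\; \frac{a_{12}a_{23}\cdots a_{n,1}}{a_{11}a_{22}\cdots a_{nn}},
\]
which is $<1$ by (a); hence every consecutive $2\times 2$ minor $a_{ii}a_{i+1,i+1}-a_{i,i+1}a_{i+1,i}$ is positive. Define
\[
d_i \;:=\; \frac{a_{i+1,i+1}}{a_{ii}\,a_{i+1,i+1}-a_{i,i+1}\,a_{i+1,i}} \;>\; 0,\qquad c_i \;:=\; d_i\,\frac{a_{i,i+1}}{a_{i+1,i+1}} \;>\; 0,
\]
and let $M$ be the matrix with diagonal $d_i$ and off-diagonals $-c_i$ at position $(i,i+1\bmod n)$. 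Verify $MA=I$ directly: the diagonal entries and the case $j=i+1$ hold by the definitions of $d_i$ and $c_i$; for $j\neq i,i+1$, applying (b) with intermediate vertex $v_{i+1}$ turns $d_i a_{ij}-c_i a_{i+1,j}=0$ into an identity. Hence $M=A^{-1}$ is a $Z$-matrix whose digraph is precisely the $n$-cycle with loops, and since $M^{-1}=A>0$, Theorem~\ref{m} concludes that $M$ is a nonsingular $M$-matrix.

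The main obstacle is the cyclic bookkeeping in $(2)\Rightarrow(1)$: iterating (b) through all $n-1$ intermediate vertices on the long arc while correctly tracking the diagonal entries that accumulate in the denominator, and then combining with (a) to force positivity of the consecutive $2\times 2$ determinants. Once this is secured, the verification that $MA=I$ is a mechanical computation driven entirely by (b).
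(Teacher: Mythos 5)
Your proof is correct, but it reaches the statement by a partly different route than the paper, which derives this theorem from its main result: Theorem \ref{Cycle matrix} combined with Theorem \ref{bdsw Z matrix}(1). Your $(2)\Rightarrow(1)$ argument is essentially the paper's forward computation in disguise: using (b) along the long arc gives $a_{i,i+1}a_{i+1,i}/(a_{ii}a_{i+1,i+1})=c/d$, so your $d_i$ and $c_i$ coincide exactly with the entries $b_{ii}$ and $-b_{i,i+1}$ of formula (\ref{inverse entries of cyclic matrix}), and the entrywise verification of $MA=I$ via (b), followed by Theorem \ref{m}, mirrors the paper's verification of $AB=I$. The genuine divergence is in $(1)\Rightarrow(2)$: the paper never computes $A$ explicitly, but instead applies Jacobi's identity (Lemma \ref{inv det}) to convert the vanishing of certain $(n-2)\times(n-2)$ minors of the bdsw matrix $A^{-1}$ into the vanishing of the $2\times 2$ almost-principal minors of $A$ that encode the inverse cyclic property; this keeps Theorem \ref{Cycle matrix} free of sign hypotheses, which is what lets the paper reuse it for the inverse $N$-matrix results. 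Your alternative---factoring $A^{-1}=D(I-P)$ with $P$ a weighted cyclic shift, observing $P^n=\alpha I$, and collapsing the Neumann series to a closed form for every $a_{ij}$---is a legitimate and arguably more transparent shortcut for the $M$-matrix case, since (a) and (b) then read off directly; its cost is that it leans on the $M$-matrix hypothesis (you need $\det(A^{-1})=(\prod_i d_i)(1-\alpha)>0$, or equivalently $\rho(P)<1$, to get $\alpha<1$; nonsingularity alone only gives $\alpha\neq 1$), whereas the paper's minor-based argument applies to arbitrary nonsingular matrices with bdsw inverses. One cosmetic slip: the common factor to clear when checking (a) is $(1-\alpha)^n\prod_i d_i$, not $(1-\alpha)^n\bigl(\prod_i d_i\bigr)^2$.
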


Set $Z:=(e^n,e^1,e^2, \ldots,e^{n-1})$, where $e^k$ denotes the $k^{th}$ standard basis vector of $\mathbb{R}^n$.

\begin{theorem}\cite[Corollary 4.5]{McDonald unipathic}\label{cycle permutation}
    Let $Z$ be the matrix defined as above. Let $\alpha_1, \alpha_2, \dotsc, \alpha_n$ be nonnegative numbers and 
    $$A:=p(Z)=\alpha_1I +\alpha_2Z+ \dotsb +\alpha_nZ^{n-1}.$$ Then the following are equivalent:
    \begin{enumerate}
        \item $A^{-1}$ is an $M$-matrix with $\mathcal{D}(A^{-1})$ being the simple $n$-cycle $v_1 \rightarrow v_2 \rightarrow \dots \rightarrow v_n \rightarrow v_1$, with loops.
        \item $A>0$ satisfies 
        \begin{enumerate}
            \item $\alpha_1 >\alpha_2 >0$,
            \item $\alpha_r =(\alpha_2)^{r-1}/(\alpha_1)^{r-2},$ for $r=3, \dotsc,n$.
        \end{enumerate}
    \end{enumerate}
\end{theorem}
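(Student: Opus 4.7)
The plan is to reduce the statement directly to Theorem \ref{motive}, exploiting the circulant structure of $A$. First I would unpack the definition of $Z$: since $Ze^k=e^{k-1}$ (indices modulo $n$), one checks that $(Z^{k})_{ij}=1$ precisely when $j\equiv i+k\pmod n$. Consequently
\[
A_{ij}=\alpha_{((j-i)\bmod n)+1},
\]
so $A$ is a circulant whose diagonal is $\alpha_1$, whose first off-diagonal (cyclic) is $\alpha_2$, and so on, with $A_{i,i-1}=\alpha_n$ via wrap-around. In particular $A>0$ is equivalent to $\alpha_r>0$ for every $r$.

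Next I would rewrite the two conditions in Theorem \ref{motive} in terms of the $\alpha_r$. The product condition $a_{11}\cdots a_{nn}>a_{12}a_{23}\cdots a_{(n-1)n}a_{n1}$ becomes $\alpha_1^{\,n}>\alpha_2^{\,n}$, which, under positivity, is exactly $\alpha_1>\alpha_2>0$. For the transitivity condition, the simple $n$-cycle has a unique directed path from $v_i$ to $v_j$, so for any intermediate $v_k$ we need
\[
a_{ij}=\frac{a_{ik}a_{kj}}{a_{kk}}.
\]
By the circulant structure it suffices to take $i=1$ and $k=j-1$ (for $3\leq j\leq n$), giving the recurrence
\[
\alpha_{j}=\frac{\alpha_{j-1}\,\alpha_{2}}{\alpha_{1}},\qquad j=3,\dotsc,n,
\]
whose solution is $\alpha_j=\alpha_2^{\,j-1}/\alpha_1^{\,j-2}$, matching condition (2)(b).

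To make the equivalence airtight I would then verify the converse direction: assuming (2)(a)–(b), I would show that the transitivity identity $\alpha_1\alpha_{(j-i)+1}=\alpha_{(k-i)+1}\alpha_{(j-k)+1}$ (and its cyclic analogues) holds for every admissible triple $(i,j,k)$, by a short computation using the explicit formula for $\alpha_r$; the exponents of $\alpha_2$ and $\alpha_1$ on both sides automatically balance. With both conditions of Theorem \ref{motive} shown equivalent to the conditions in (2), the theorem follows.

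The only mild obstacle I anticipate is bookkeeping of the cyclic indices when verifying the transitivity condition for triples whose path wraps from $v_n$ back to $v_1$, but the circulant symmetry reduces every such case to the non-wrapping case already handled. Nowhere does the argument require new machinery beyond Theorem \ref{motive}.
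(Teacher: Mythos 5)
Your proposal is correct, but note that the paper does not actually prove this statement: it is quoted verbatim from \cite[Corollary 4.5]{McDonald unipathic} and used only as motivation, so there is no in-paper proof to compare against line by line. Your reduction to Theorem \ref{motive} is sound: the identification $A_{ij}=\alpha_{((j-i)\bmod n)+1}$ is right (the paper displays exactly this circulant in the proof of Theorem \ref{polyn}), so $d=\alpha_1^n$, $c=\alpha_2^n$, and condition 2(a) of Theorem \ref{motive} becomes $\alpha_1>\alpha_2>0$ under positivity; taking $k=j-1$ extracts the recurrence $\alpha_j=\alpha_{j-1}\alpha_2/\alpha_1$ for the forward direction, and your verification that $\alpha_1\alpha_{s+t+1}=\alpha_{s+1}\alpha_{t+1}$ for all $s,t\ge 1$ with $s+t\le n-1$ (with circulant symmetry absorbing the wrap-around triples) settles the converse. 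The closest thing to a ``paper's own proof'' is the proof of the $N$-matrix analogue, Theorem \ref{polyn}, which follows the same template but routes through the paper's new machinery --- Theorem \ref{bdsw Z matrix} and the inverse cyclic property --- rather than through the external Theorem \ref{motive}; you could equally well derive the present statement from Theorem \ref{bdsw Z matrix}(1), which would make it a corollary of the paper's main result instead of a corollary of McDonald et al. Either route works; yours leans on the cited theorem, the paper's framework would make the result self-contained.
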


The motivation for the article comes from the question as to what lies in the background of Theorem \ref{motive}, which when applied to matrices satisfying the second condition, gives rise to the special class of inverse $M$-matrices, described in the first item. Specifically, inspired by the first item above, we introduce a matrix class, without requiring it to be an $M$-matrix. We call such a matrix a {\it bi-diagonal south-west matrix}. Simultaneously, another new class is identified, taking cue from the second item, giving rise to what we refer to as an {\it inverse cyclic matrix.}

A relationship is established between these two matrix classes in Theorem \ref{Cycle matrix}. This result, while enabling us to recover Theorem \ref{motive} as a particular case, further allows us to obtain a new result on inverse $N$-matrices, which is presented in Theorem \ref{bdsw Z matrix}. An analogue of Theorem \ref{cycle permutation} is obtained in Theorem \ref{polyn}, for inverse $N$-matrices.

\section{Inverse cyclic matrices and bi-diagonal south-west matrices}

We propose two matrix classes and study their relationship. The first notion is that of the {\it inverse cyclic property} of a matrix.

\begin{definition}
    Let $A=(a_{ij})  \in \mathbf{M}_n(\mathbb{R})$ be such that $a_{ii}\neq 0, 1 \leq i \leq n.$ Then $A$ is said to have the inverse cyclic property (or is called an inverse cyclic matrix), if the entries of $A$ satisfy the following conditions:
    \begin{equation}\label{cyclic property}
        a_{ij}=\begin{cases}
            \frac{a_{ik}a_{kj}}{a_{kk}}, & \text{ for } i <k <j,\\
            \frac{a_{in}a_{nj}}{a_{nn}}, & \text{ for } j<i \neq n,\\
            \frac{a_{n1}a_{1j}}{a_{11}}, & \text{ for } j<i=n.
        \end{cases}
    \end{equation}
\end{definition}

The requirement on the entries above can also be written equivalently, as 
 \begin{equation}\label{equivalent cyclic property}
        a_{ij}=\begin{cases}
            \frac{a_{i(i+1)}a_{(i+1)(i+2)} \dotsb a_{(j-1)j}}{a_{(i+1)(i+1)}a_{(i+2)(i+2)} \dotsb a_{(j-1)(j-1)}}, & \text{ for } i+1<j,\\
            \frac{a_{i(i+1)}a_{(i+1)(i+2)} \dotsb a_{(n-1)n} a_{n1} a_{12} \dotsb a_{(j-1)j}}{a_{(i+1)(i+1)} \dotsb a_{nn}a_{11} \dotsb a_{(j-1)(j-1)}}, & \text{ for } j<i \neq n,\\
            \frac{a_{n1}a_{12}a_{23} \dotsb a_{(j-2)(j-1)}a_{(j-1)j}}{a_{11}a_{22} \dotsb a_{(j-1)(j-1)}}, & \text{ for } j<i=n.
        \end{cases}
    \end{equation}

For an inverse cyclic matrix, it follows that all the elements in the upper triangular part are determined by the elements on the main diagonal and the super diagonal, while the elements in the lower triangular part are determined by those on the main diagonal, the super diagonal and the entry lying in the intersection of the first column and the last row of the matrix. 

\begin{example}\label{nonsingular cycle matrix}
It may be verified that the matrix $A=\begin{pmatrix}
        ~~2 & -2 &-4 &~~0\\
        ~~0 &~~1&~~2 &~~0\\
        ~~0&~~0&-2&~~0\\
        ~~2 & -2 &-4 &~~1
    \end{pmatrix}$ 
has the inverse cyclic property.
\end{example}

For $A=(a_{ij})  \in \mathbf{M}_n(\mathbb{R})$, set $$d:=a_{11}a_{22}\dotsb a_{(n-1)(n-1)}a_{nn},$$
i.e. the product of the diagonal entries of $A$, 
$$c:=a_{12}a_{23} \dotsb a_{(n-1)n}a_{n1},$$
which we may refer to as the {\it cyclic} product. Let us denote by $x^k$, the $k^{th}$ column of the matrix $X=(x_{ij})$.

\begin{theorem}\label{det of a cycle matrix}
    Let $A \in \mathbf{M}_n(\mathbb{R})$ be an inverse cyclic matrix. Then $$\det A =\frac{(d - c)^{n-1}}{d^{n-2}}.$$
\end{theorem}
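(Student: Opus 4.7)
The plan is to discover a clean factorization $A = UTV$, compute the three factors' determinants separately, and multiply. Set $d_k := a_{kk}$ for $1 \leq k \leq n$, $s_k := a_{k,k+1}$ for $1 \leq k \leq n-1$, and $s_n := a_{n,1}$; then every identity in (\ref{equivalent cyclic property}) writes an entry of $A$ as a monomial in the $s_k$'s and $d_k$'s. Comparing these monomials suggests the multiplicative structure
\[
a_{ij} = \begin{cases} u_i v_j, & i \leq j, \\ \frac{c}{d}\, u_i v_j, & i > j, \end{cases}
\]
with $u_i := (d_1 d_2 \dotsb d_i)/(s_1 s_2 \dotsb s_{i-1})$ and $v_j := (s_1 s_2 \dotsb s_{j-1})/(d_1 d_2 \dotsb d_{j-1})$. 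I would verify this by treating each of the three cases of (\ref{equivalent cyclic property}) separately; the only one that requires a moment of bookkeeping is $j < i < n$, where the ``wrap-around'' numerator $s_i s_{i+1} \dotsb s_n s_1 \dotsb s_{j-1}$ recombines as $c/d$ times the upper-triangular prescription, via $c = s_1 s_2 \dotsb s_n$ and $d = d_1 d_2 \dotsb d_n$. Note that $u_i v_i$ telescopes to $d_i$.

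With this in hand, let $U := \mathrm{diag}(u_1, \dotsc, u_n)$, $V := \mathrm{diag}(v_1, \dotsc, v_n)$, and let $T$ be the $n \times n$ matrix whose $(i,j)$ entry is $1$ for $i \leq j$ and $c/d$ for $i > j$. Then $A = UTV$ and $\det U \cdot \det V = \prod_{i=1}^n u_i v_i = \prod_{i=1}^n d_i = d$. To evaluate $\det T$, perform the row operations $R_i \to R_i - R_{i+1}$ for $i = 1, \dotsc, n-1$; a direct check shows that these convert $T$ into a matrix whose row $i$ (for $i < n$) equals $(1 - c/d)$ times the $i^{\text{th}}$ standard basis vector, while the last row is unchanged at $(c/d, \dotsc, c/d, 1)$. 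Expanding along the last column then gives $\det T = (1 - c/d)^{n-1}$, and hence
\[
\det A = d\,(1 - c/d)^{n-1} = \frac{(d-c)^{n-1}}{d^{n-2}}.
\]

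The main technical obstacle is that $u_i$ and $v_j$ are defined only when all of $s_1, \dotsc, s_n$ are nonzero, a condition that the inverse cyclic definition does not impose. However, by (\ref{equivalent cyclic property}) every entry of an inverse cyclic matrix (given the standing assumption $d_k \neq 0$) is a polynomial in $s_1, \dotsc, s_n$ with coefficients that are rational in $d_1, \dotsc, d_n$, so both sides of the desired identity are rational functions of the $2n$ parameters $d_k, s_k$. Having established the identity on the dense open set where each $s_k \neq 0$, it extends to the entire parameter space by the principle of rational identity, completing the proof.
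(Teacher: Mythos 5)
Your proof is correct, but it takes a genuinely different route from the paper's. The paper performs the single sweep of column operations $c^k = a^k - \frac{a_{(k-1)k}}{a_{(k-1)(k-1)}}a^{k-1}$, observes that the result is lower triangular, and reads off the determinant from the diagonal; since it only ever divides by diagonal entries, it applies verbatim even when some super-diagonal entry or $a_{n1}$ vanishes. You instead exhibit the diagonal equivalence $A = UTV$ with $T$ depending on the single parameter $c/d$ (I checked all three cases of the factorization, including the wrap-around one, and the row-reduction giving $\det T = (1-c/d)^{n-1}$; all correct). This costs you the extra density step, since $u_i$ and $v_j$ are undefined when some $s_k = 0$ --- a case that genuinely occurs (e.g.\ Example \ref{nonsingular cycle matrix} has $a_{34}=0$) --- but your patch is sound: by (\ref{equivalent cyclic property}) every entry is a Laurent monomial whose denominator involves only the $d_k$'s, so both sides of the identity are continuous on $\{d_k \neq 0 \text{ for all } k\}$ and agreement on the dense subset where all $s_k \neq 0$ suffices. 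What your approach buys in exchange is structural information the paper's elimination does not make visible: every full inverse cyclic matrix is diagonally equivalent to the canonical matrix $T$ determined by $c/d$ alone, which immediately yields $A^{-1} = V^{-1}T^{-1}U^{-1}$ and thus offers an alternative path to the bdsw form of the inverse in Theorem \ref{Cycle matrix}.
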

\begin{proof}
First, note that an inverse cyclic matrix is one where all the diagonal entries are nonzero. Thus, $d \neq 0.$ Let $C$ be the matrix obtained from $A$, after employing the following column operations:
\begin{equation}\label{REF operation}
    c^k=a^k - \dfrac{a_{(k-1)k}}{a_{(k-1)(k-1)}} a^{k-1}, ~2 \leq k \leq n.
    \end{equation}
      It is easy to see that $C$ is a lower triangular matrix with diagonal entries $$c_{ii}=\begin{cases}
        \hspace{2.7cm} a_{11}, & \text{ for }i=1,\\
        a_{ii} - \frac{ a_{12}a_{23} \dotsb a_{(n-1)n}a_{n1}}{a_{11}a_{22} \dotsb a_{(i-1)(i-1)} a_{(i+1)(i+1)} \dotsb a_{nn}}, & \text{ for } i \neq 1.
    \end{cases}$$
Thus, we have $$c_{ii}=\begin{cases}
        \hspace{.3cm} a_{11}, & \text{ for }i=1,\\
        \frac{d-c}{d}a_{ii}, & \text{ for } i \neq 1.
    \end{cases}$$
Then,
\begin{eqnarray*}
        \det C=\prod\limits_{\substack{i=1}}^{n} c_{ii} &=&  \Big(\frac{d-c}{d}\Big)^{n-1} {a_{11}a_{22}a_{33} \dotsb a_{nn}}\\
        &=& \dfrac{(d-c)^{n-1}}{d^{n-2}}.
    \end{eqnarray*}
\end{proof}

Next, we introduce the second new class of matrices called {\it bi-diagonal south-west} matrices.

\begin{definition}
Let $A \in \mathbf{M}_n(\mathbb{R})$. Suppose that all the entries of $A$ that lie on the main diagonal, the super diagonal and the entry in the south-west corner, are nonzero, while all the other entries are zero. Then $A$ is called a bi-diagonal south-west matrix, or bdsw, for short.
\end{definition}
As an illustration, a pattern of the bdsw matrix (for $n=4$), is given by 
$$\begin{pmatrix}
    * & * & 0&0\\
    0 & * & * &0\\
    0&0&* &*\\
    * &0 &0 &*
\end{pmatrix},$$ where the symbol $*$ stands for a nonzero entry.

\begin{remark}\label{structure of proper principal submatrix of bdsw matrix}
Let $A$ be a bdsw matrix. Then every proper principal submatrix of $A$ is either an upper triangular matrix or a block lower triangular matrix. Moreover, every proper principal submatrix of a bdsw matrix is nonsingular. 
\end{remark}

The motivation for considering the class of bdsw matrices arises from the structure of the digraph associated with it. The digraph corresponding to a bdsw matrix (of order $n \times n$) is the simple directed $n$-cycle $v_1 \rightarrow v_2 \rightarrow \dots \rightarrow v_n \rightarrow v_1$, with loops. The digraph for a bdsw matrix of order $4$ is  given below.

\begin{figure}[H]
    \centering
   \begin{tikzpicture}
       \foreach \i/\j/\k/\l in {0/0/1/135, 0/2/2/45, 2/2/3/315, 2/0/4/225}{
           \draw (\i,\j) node[circle, fill=black, inner sep=2pt, label=\l:{$v_\k$}](\k){};
       }
       
      \draw[thick] (0,0) -- (2,0) node[midway] {\tikz \draw[<-, thick, scale=3] (0,0) -- (0.01,0);};
\draw[thick] (0,2) -- (2,2) node[midway] {\tikz \draw[->, thick, scale=3] (0,0) -- (0.01,0);};

    \draw[thick, black, decoration={markings, mark=at position 0.5 with {\arrow[scale=1]{>}}}, postaction={decorate}] 
      (0, 0) .. controls (0, 1) .. (0, 2);
      \draw[thick, black, decoration={markings, mark=at position 0.5 with {\arrow[scale=1]{<}}}, postaction={decorate}] 
      (2, 0) .. controls (2, 1) .. (2, 2);
      
       \draw[thick, ->] (1) to[in=180, out=270, looseness=15] (1);
       \draw[thick, ->] (2) to[in=90, out=180, looseness=15] (2);
       \draw[thick, ->] (3) to[in=90, out=360, looseness=15] (3);
       \draw[thick, ->] (4) to[in=0, out=270, looseness=15] (4);
   \end{tikzpicture}
    \caption{Digraph of a $4 \times 4$ bdsw matrix.}
    \label{fig:Digraph bdsw matrix}
\end{figure}
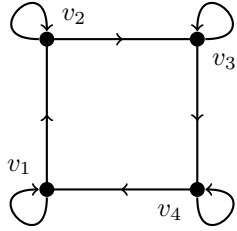

Recall that {\it unipathic} digraphs are those digraphs in which there is at most one (directed) path between any two vertices. However, matrices $A$ possessing the bdsw structure are examples of irreducible unipathic matrices. This means that, in $\mathcal{D}(A)$, there is exactly {\it one} path between any two vertices.

We will make use of an auxiliary result, which presents a graph theoretic formula for the inverse of a nonsingular matrix. Let us recall some terminology, in this context. Let $p(v_i \to v_j)$ denote any (directed) path from $v_i$ to $v_j$ and $l(p),$ the length of the path $p$. The set of vertices of $\mathcal{D}(A)$ not belonging to the path $p$ will be denoted by $V(p)$, and by $A[p(v_i \to v_j)]$ we signify the product of those elements of $A$, that lie along the given path $p$ of $\mathcal{D}(A)$, from vertex $v_i$ to vertex $v_j$. We set, $A[p(v_i \to v_j)] = 0$ if there is no path from vertex $v_i$ to vertex $v_j$. 

\begin{theorem}\cite[Corollary 9.1]{Maybee}\label{entries of $A^{-1}$}
Let $A=(a_{ij}) \in \mathbf{M}_n \mathbb{(R)}$ be nonsingular. Set $A^{-1} = (\tilde{a}_{i j})$. Then, we have $$ \tilde{a}_{i i} = \det A(i)/ \det A$$ and 
\begin{equation} \label{inverse entry}\tilde{a}_{i j}= \frac{1}{\det A} \sum_{p(v_i \to v_j)} (-1)^{l(p)} A[p(v_i \to v_j)] \det A[V(p)], \;\;\; \text{ for } i \neq j. 
\end{equation} 
\end{theorem}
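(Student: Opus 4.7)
The plan is to derive both formulas from the adjugate identity $A \cdot \mathrm{adj}(A) = (\det A) I$, which gives $\tilde a_{ij} = C_{ji}/\det A$ with $C_{ji} = (-1)^{i+j} M_{ji}$ denoting the $(j,i)$-cofactor. The diagonal identity $\tilde a_{ii} = \det A(i)/\det A$ is immediate, since the minor $M_{ii}$ is exactly $\det A(i)$ by the definition of the complementary principal submatrix.

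For the off-diagonal case ($i \neq j$), my strategy is to compute $C_{ji}$ by means of an auxiliary matrix. Let $B$ be the matrix that agrees with $A$ except that its $j$th row is replaced by the $i$th standard basis row $(e^i)^T$. Laplace expansion of $\det B$ along row $j$ produces a single surviving summand, so $\det B = C_{ji}$; on the other hand, Leibniz expansion of $\det B$ retains only those permutations $\pi \in S_n$ satisfying $\pi(j) = i$, giving
$$C_{ji} \;=\; \sum_{\substack{\pi \in S_n\\ \pi(j) = i}} \mathrm{sgn}(\pi) \prod_{k \neq j} a_{k,\pi(k)}.$$

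I would then reorganise this sum by the cycle structure of $\pi$. Since $\pi(j) = i$ and $i \neq j$, the indices $i$ and $j$ lie in a common cycle of $\pi$, and reading that cycle forward as $i \to \pi(i) \to \pi^2(i) \to \cdots \to \pi^r(i) = j$ corresponds to a directed path $p(v_i \to v_j)$ in $\mathcal{D}(A)$ of length $l(p) = r$, while the restriction of $\pi$ to the complementary vertex set $V(p)$ is an arbitrary permutation of $V(p)$. Collecting edge-weights along the cycle yields the factor $A[p(v_i \to v_j)]$, and the remaining factors sum over $\pi|_{V(p)}$ to $\det A[V(p)]$ by Leibniz; terms for which some edge-weight vanishes contribute zero, so effectively only genuine paths in $\mathcal{D}(A)$ are counted.

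The main obstacle is the sign bookkeeping. The cycle containing $i$ and $j$ has $r+1$ elements and therefore contributes $(-1)^{(r+1)-1} = (-1)^{l(p)}$ to $\mathrm{sgn}(\pi)$, while the product of signs of the remaining cycles equals $\mathrm{sgn}(\pi|_{V(p)})$ and is absorbed into $\det A[V(p)]$. Verifying that $\pi \mapsto (p, \pi|_{V(p)})$ is a bijection between permutations with $\pi(j)=i$ and pairs (directed path from $v_i$ to $v_j$ in $\mathcal{D}(A)$, permutation of $V(p)$) is routine: given such a pair, one simply closes the path into a cycle $j \to i \to \pi(i) \to \cdots \to j$ and adjoins the prescribed permutation on $V(p)$. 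Assembling these ingredients and dividing by $\det A$ produces the claimed formula for $\tilde a_{ij}$.
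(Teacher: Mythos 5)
Your proof is correct. The paper does not prove this statement at all --- it is quoted verbatim as Corollary 9.1 of the cited reference of Maybee, Olesky, van den Driessche and Wiener --- so there is no in-paper argument to compare against; your derivation (cofactor $C_{ji}$ realised as the determinant of $A$ with row $j$ replaced by $(e^i)^T$, Leibniz expansion restricted to permutations with $\pi(j)=i$, and the cycle decomposition splitting $\pi$ into the path $v_i\to v_j$ of length $r$ with sign $(-1)^r$ plus an arbitrary permutation of $V(p)$ reassembling into $\det A[V(p)]$) is the standard digraph--determinant argument used in that source, and the sign bookkeeping checks out.
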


We will make use of the following particular case.

\begin{corollary}
Let $A$ be a nonsingular unipathic matrix. Then, the off-diagonal entries of $A^{-1}$ are given by \begin{equation}\label{inverse entries}
    \tilde{a}_{ij}= (-1)^{l(p)} A[p(v_i \to v_j)] \det A[V(p)]/ \det A, \text{ for } i \neq j,
\end{equation} where $p$ is the unique path from vertex $v_i$ to vertex $v_j$, if it exists.
\end{corollary}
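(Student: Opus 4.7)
The plan is to invoke the graph-theoretic inverse formula stated in the previous theorem and collapse the sum in (\ref{inverse entry}) to at most one term by exploiting the unipathicity hypothesis. Fix a pair of distinct indices $i,j$. Since $A$ is unipathic, the index set of the sum in (\ref{inverse entry}), which ranges over all directed paths from $v_i$ to $v_j$ in $\mathcal{D}(A)$, has cardinality at most one.

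If the unique path $p$ from $v_i$ to $v_j$ exists, the sum in (\ref{inverse entry}) reduces to the single summand corresponding to $p$, yielding precisely the formula (\ref{inverse entries}).

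If, on the other hand, no path from $v_i$ to $v_j$ exists in $\mathcal{D}(A)$, then the sum in (\ref{inverse entry}) is empty, and hence $\tilde{a}_{ij}=0$; this matches the right-hand side of (\ref{inverse entries}) under the convention, recalled just before the previous theorem, that $A[p(v_i \to v_j)] = 0$ when no such path exists. So (\ref{inverse entries}) holds in both cases, with the right-hand side understood as zero whenever $p$ is absent.

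There is essentially no obstacle: the corollary is a direct specialization of the preceding theorem and requires no computation. The only care needed lies in the bookkeeping of the ``no path'' case, which is already absorbed into the convention fixed for the path product; once this is noted, the single-summand expression (\ref{inverse entries}) is immediate.
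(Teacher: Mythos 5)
Your proposal is correct and is exactly the intended argument: the paper states this corollary without proof as an immediate specialization of Theorem \ref{entries of $A^{-1}$}, since unipathicity collapses the sum in (\ref{inverse entry}) to at most one term, with the empty-sum case handled by the stated convention $A[p(v_i \to v_j)] = 0$. No further comment is needed.
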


\begin{lemma}\label{inverse of bdsw is full matrix}
    The inverse of a nonsingular bdsw matrix is a full matrix.
\end{lemma}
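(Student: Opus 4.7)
The plan is to combine the unipathic structure of a bdsw matrix with the graph-theoretic inversion formula recorded in Theorem \ref{entries of $A^{-1}$} (and its specialization in \eqref{inverse entries}), leveraging the nonsingularity of proper principal submatrices noted in Remark \ref{structure of proper principal submatrix of bdsw matrix}.

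First I would observe that, by the very shape of $\mathcal{D}(A)$ displayed in Figure \ref{fig:Digraph bdsw matrix}, a bdsw matrix $A$ is unipathic: between any two distinct vertices $v_i,v_j$ there is exactly one directed path, namely the one that traverses the super-diagonal (and the $(n,1)$ edge when needed) along the cycle, and loops are excluded because a path consists of distinct vertices. So the hypothesis of the corollary giving \eqref{inverse entries} is in force, and $\det A\neq 0$ is at our disposal by assumption.

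Next, for the off-diagonal entry $\tilde a_{ij}$ with $i\neq j$, I would read off the unique path $p(v_i\to v_j)$ explicitly: for $i<j$ it is $v_i\to v_{i+1}\to\cdots\to v_j$, and for $i>j$ it wraps around via $v_n\to v_1$. In either case $A[p(v_i\to v_j)]$ is a product of super-diagonal entries (possibly together with $a_{n1}$), all of which are nonzero by the definition of a bdsw matrix; hence this factor in \eqref{inverse entries} is nonzero. It remains to show $\det A[V(p)]\neq 0$. If $V(p)=\emptyset$ (which occurs precisely when the path visits every vertex, i.e.\ when $j=i-1$ for $i>1$, or $(i,j)=(1,n)$), then $\det A[V(p)]$ is the empty product $1$; otherwise $V(p)$ is a proper nonempty subset of $\{1,\dots,n\}$ and $A[V(p)]$ is a proper principal submatrix, which is nonsingular by Remark \ref{structure of proper principal submatrix of bdsw matrix}. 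Either way, $\tilde a_{ij}\neq 0$.

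Finally, for the diagonal entries, I would use $\tilde a_{ii}=\det A(i)/\det A$ from Theorem \ref{entries of $A^{-1}$}; since $A(i)$ is a proper principal submatrix of $A$, Remark \ref{structure of proper principal submatrix of bdsw matrix} again guarantees $\det A(i)\neq 0$, so $\tilde a_{ii}\neq 0$. Combining the two cases yields $A^{-1}$ full. I do not anticipate a real obstacle here; the only point that needs mild care is the bookkeeping of the empty complement $V(p)$ and the verification that every non-wraparound complement is proper in $\{1,\dots,n\}$, which is handled by noting $|p|\ge 2$ whenever $i\neq j$.
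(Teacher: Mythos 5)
Your proposal is correct and follows essentially the same route as the paper: invoke the unipathic structure to apply the specialized inversion formula \eqref{inverse entries}, use irreducibility (i.e., the nonvanishing of the super-diagonal and corner entries) to get $A[p(v_i\to v_j)]\neq 0$, and use Remark \ref{structure of proper principal submatrix of bdsw matrix} to get $\det A[V(p)]\neq 0$ and $\det A(i)\neq 0$. Your version is merely more explicit about the empty-complement bookkeeping, which the paper leaves implicit.
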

\begin{proof}
Let $A$ be a nonsingular bdsw matrix. Then $A$ is a unipathic matrix, so that the entries of $A^{-1}=(\tilde{a}_{ij})$ satisfy (\ref{inverse entries}). Moreover, $A$ is irreducible and so $A[p(v_i \to v_j)] \neq 0$, for all $i,j$. Also, from Remark \ref{structure of proper principal submatrix of bdsw matrix}, $\det A[V(p)] \neq 0$, for any path $p$. In particular, all the diagonal entries of $A^{-1}$ are nonzero and hence $\tilde{a}_{ij} \neq 0, \text{ for all } i,j.$ Thus, $A^{-1}$ is a full matrix. 
\end{proof}

\begin{remark} 
The converse of Lemma \ref{inverse of bdsw is full matrix} is false. For example, the matrix $$A=\begin{pmatrix}
    1 & 2&2\\
    1 &1&1\\
    2&2&1
\end{pmatrix},$$ is a nonsingular full matrix, But its inverse $$A^{-1}=\begin{pmatrix}
    -1 & ~~2 &~~0\\
    ~~1 &-3 &~~1\\
    ~~0 & ~~2 &-1
\end{pmatrix},$$ is not a bdsw matrix.
\end{remark}

Now, we prove the main result of this article. This brings about the relationship between {\it nonsingular} matrices possessing the inverse cyclic property and bdsw matrices. This result also justifies the nomenclature “inverse cyclic matrix". We shall make use of the following auxiliary result, which presents a formula for the determinant of a submatrix of a nonsingular matrix and a related submatrix of its inverse. 

\begin{lemma}(\cite{Karlin}, pp. 5) \label{inv det} 
Let $A \in \mathbf{M}_{n}(\mathbb{R})$ be nonsingular and $B:=A^{-1}$. Let $\alpha =\{i_{1}, i_{2}, \dotsc, i_{p}\}$ and $\beta =\{j_{1}, j_{2}, \dotsc, j_{p} \}$ with $1 \leq i_{1} < i_{2} < \dots < i_{p} \leq n$ and $1 \leq j_{1} < j_{2} < \dots < j_{p} \leq n$. Set $\gamma_p:=  
(-1)^{\sum\limits_{m=1}^{p} (i_{m}+j_{m})}.$ 
We then have:   
\begin{equation}\label{det inverse} 
    \det B \begin{bmatrix}
    \alpha \\ 
     \beta
\end{bmatrix} = \frac{\gamma_p}{\det A } \det A\begin{bmatrix}
    \beta' \\ 
     \alpha'
\end{bmatrix}.
\end{equation}
\end{lemma}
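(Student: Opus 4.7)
The strategy is to reduce the identity to a block-matrix calculation via suitable row and column permutations, and then to absorb the resulting sign into $\gamma_p$. Let $P$ be the permutation matrix that moves the rows indexed by $\alpha$ into the last $p$ positions while preserving the natural order of both $\alpha$ and $\alpha'$; let $Q$ be the analogous permutation matrix for $\beta$. Set $M:=QAP^{T}$. By construction $M$ has the block form
$$M \;=\; \begin{pmatrix} M_{11} & M_{12}\\ M_{21} & M_{22}\end{pmatrix}, \qquad M_{11} \;=\; A\begin{bmatrix}\beta'\\ \alpha'\end{bmatrix}, \quad M_{22} \;=\; A\begin{bmatrix}\beta\\ \alpha\end{bmatrix},$$
and its inverse $M^{-1}=PBQ^{T}$ has bottom-right $p\times p$ block equal to $B\begin{bmatrix}\alpha\\ \beta\end{bmatrix}$.

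Assuming first that $M_{11}$ is nonsingular, the Schur-complement identity $(M^{-1})_{22}=(M_{22}-M_{21}M_{11}^{-1}M_{12})^{-1}$, combined with the block determinant formula $\det M=\det M_{11}\cdot\det(M_{22}-M_{21}M_{11}^{-1}M_{12})$, yields
$$\det B\begin{bmatrix}\alpha\\ \beta\end{bmatrix} \;=\; \frac{\det M_{11}}{\det M} \;=\; \frac{\det A\begin{bmatrix}\beta'\\ \alpha'\end{bmatrix}}{(\det P)(\det Q)\det A}.$$
What remains is to show $(\det P)(\det Q)=\gamma_p$. A short inversion count shows that the permutation carrying $(1,2,\dots,n)$ to the concatenation $(\alpha';\alpha)$ has sign $(-1)^{(i_1+\cdots+i_p)-p(p+1)/2}$, with the analogous formula for $Q$ in terms of the $j_m$'s. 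Multiplying, the offset $p(p+1)$ is even, and the product collapses to $(-1)^{\sum_m (i_m+j_m)}=\gamma_p$; since $\gamma_p^{-1}=\gamma_p$, formula (\ref{det inverse}) follows.

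The one subtlety is the degenerate case $\det M_{11}=0$, where Schur complementation is not directly available. To handle it, multiply through by $\det A$ and use $B=\mathrm{adj}(A)/\det A$ to rewrite the claim as the polynomial identity $\det\mathrm{adj}(A)\begin{bmatrix}\alpha\\ \beta\end{bmatrix}=\gamma_p(\det A)^{p-1}\det A\begin{bmatrix}\beta'\\ \alpha'\end{bmatrix}$ in the $n^{2}$ entries of $A$. Since this identity holds on the Zariski-dense open set where $\det A\neq 0$ and $\det M_{11}\neq 0$, it extends by continuity to every $A$, and in particular to every nonsingular $A$ as required. Beyond this technical caveat, the main step requiring care is the sign bookkeeping for $P$ and $Q$.
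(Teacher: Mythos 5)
The paper offers no proof of this lemma---it is the classical Jacobi identity for minors of the inverse, cited from Karlin's \emph{Total Positivity}---so your argument is not competing with an in-text proof but supplying one, and it is essentially correct. The route you take (conjugate by permutation matrices to push $\alpha$ and $\beta$ into the trailing block, read off $\det(M^{-1})_{22}=\det M_{11}/\det M$ from the Schur complement, then remove the nonsingularity hypothesis on $M_{11}$ by recasting the claim as the polynomial identity $\det\,\mathrm{adj}(A)\bigl[\begin{smallmatrix}\alpha\\ \beta\end{smallmatrix}\bigr]=\gamma_p(\det A)^{p-1}\det A\bigl[\begin{smallmatrix}\beta'\\ \alpha'\end{smallmatrix}\bigr]$ and invoking Zariski density) is a standard and complete proof of the identity, and the block bookkeeping $M_{11}=A\bigl[\begin{smallmatrix}\beta'\\ \alpha'\end{smallmatrix}\bigr]$, $(M^{-1})_{22}=B\bigl[\begin{smallmatrix}\alpha\\ \beta\end{smallmatrix}\bigr]$ is set up correctly. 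One intermediate formula is slightly off: the sequence $(\alpha';\alpha)$ has $\sum_m\bigl(n-i_m-(p-m)\bigr)=p(n-p)+\tfrac{p(p+1)}{2}-\sum_m i_m$ inversions, so the sign of that permutation is $(-1)^{\sum_m i_m-p(p+1)/2}$ only up to the extra factor $(-1)^{p(n-p)}$ (already visible for $n=2$, $p=1$, $\alpha=\{1\}$, where the sign is $-1$ but your formula gives $+1$). This does not damage the proof, because the same spurious factor appears in $\det Q$ and $(-1)^{2p(n-p)}=1$, so the product $(\det P)(\det Q)=(-1)^{\sum_m(i_m+j_m)}=\gamma_p$ is exactly as you claim; but the individual sign counts should be corrected if you state them. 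With that repair the argument is airtight, and it has the virtue of being self-contained where the paper simply defers to the literature.
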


\begin{theorem}\label{Cycle matrix}
    Suppose that $A \in \mathbf{M}_n(\mathbb{R})$ is nonsingular. Then $A$ is a full matrix and has the inverse cyclic property, if and only if $A^{-1}$ is a bdsw matrix.
\end{theorem}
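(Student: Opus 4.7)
My plan splits the biconditional into two natural implications.

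\emph{Sufficiency} ($A^{-1}$ bdsw $\Rightarrow A$ full and inverse cyclic). Set $B=A^{-1}$. Fullness of $A$ is Lemma \ref{inverse of bdsw is full matrix}. For the cyclic identities (\ref{cyclic property}), I apply the unipathic inverse formula (\ref{inverse entries}) to $B$: since $\mathcal{D}(B)$ is the simple $n$-cycle with loops, the unique path $p(v_i\to v_j)$ is explicit for every ordered pair $(i,j)$, and the complementary principal submatrix $B[V(p)]$ is block lower triangular with upper triangular diagonal blocks, so $\det B[V(p)]$ is just the product of the diagonal entries of $B$ lying off that path. Concatenating the paths $v_i\to v_k$ and $v_k\to v_j$ yields the path $v_i\to v_j$, and the diagonal factors telescope via $\prod_{\ell<k}b_{\ell\ell}\cdot\prod_{\ell>k}b_{\ell\ell}=\prod_{\ell\ne k}b_{\ell\ell}$, which is precisely $\det B(\{k\})=a_{kk}\det B$. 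Consequently $a_{ik}a_{kj}/a_{kk}$ collapses to $a_{ij}$, and the same cancellation takes care of each of the three cases of (\ref{cyclic property}).

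\emph{Necessity} ($A$ full inverse cyclic and nonsingular $\Rightarrow A^{-1}$ bdsw). The key structural observation I extract from (\ref{cyclic property}) is a pair of row proportionalities: for $1\le i\le n-1$, row $i$ of $A$ equals $(a_{i,i+1}/a_{i+1,i+1})$ times row $i+1$ on every column $\ne i$ (columns $j>i$ by the upper cyclic identity with $k=i+1$; columns $j<i$ by combining the lower cyclic identity with $a_{i,n}=a_{i,i+1}a_{i+1,n}/a_{i+1,i+1}$), and similarly row $n$ equals $(a_{n,1}/a_{11})$ times row $1$ on every column $\ne n$. Applying Lemma \ref{inv det} with $\alpha=\{i\},\beta=\{j\}$ expresses $(A^{-1})_{ij}$ as $(-1)^{i+j}/\det A$ times the determinant of the submatrix obtained from $A$ by deleting row $j$ and column $i$. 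For every non-bdsw position $(i,j)$ the deleted column is precisely the one on which the proportionality fails, while the two proportional rows (rows $i$ and $i+1$ in general; rows $1$ and $n$ when $i=n$, $j\ne 1$) both survive, so the minor vanishes and $(A^{-1})_{ij}=0$.

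To finish I have to show the $2n$ bdsw entries of $A^{-1}$ are nonzero. For the diagonal, I claim $A(\{k\})$ is itself inverse cyclic of order $n-1$ with new parameters $d'=d/a_{kk}$ and $c'=c/a_{kk}$: for $1<k<n$ this is because the ``jump'' super-diagonal entry is $a_{k-1,k+1}=a_{k-1,k}a_{k,k+1}/a_{kk}$, while for $k\in\{1,n\}$ the new south-west entry ($a_{n,2}$ or $a_{n-1,1}$) absorbs the factor $1/a_{kk}$ via the bottom-row or lower cyclic identity, and the remaining cyclic identities are inherited directly. Theorem \ref{det of a cycle matrix} then gives $\det A(\{k\})=(d-c)^{n-2}/(a_{kk}d^{n-3})$, hence $(A^{-1})_{kk}=d/[a_{kk}(d-c)]\ne 0$. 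For the super-diagonal entries and $(A^{-1})_{n,1}$, fullness of $A$ makes $\mathcal{D}(A)$ the complete looped digraph, so $A$ and hence $A^{-1}$ are irreducible; since by the previous paragraph $\mathcal{D}(A^{-1})$ is contained in the bdsw digraph, strong connectivity forces the entire Hamiltonian cycle $v_1\to v_2\to\cdots\to v_n\to v_1$, making every super-diagonal entry and $(A^{-1})_{n,1}$ nonzero. The main technical obstacle is the inverse-cyclic inheritance for $A(\{k\})$, since the boundary cases $k\in\{1,n\}$ require separate verification that the new south-west corner plays the role demanded by the definition.
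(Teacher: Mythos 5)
Your proposal is correct, but it reaches both implications by routes genuinely different from the paper's. For the direction ``$A^{-1}$ bdsw $\Rightarrow$ $A$ inverse cyclic,'' the paper applies Jacobi's identity (Lemma \ref{inv det}) to convert each defining $2\times 2$ minor of $A$ into an $(n-2)\times(n-2)$ minor of $B=A^{-1}$, and kills the latter by exhibiting linearly dependent rows forced by the bdsw zero pattern; you instead feed $B$ into the unipathic path formula (\ref{inverse entries}) and verify the three cases of (\ref{cyclic property}) by concatenating paths in the $n$-cycle, which works because $\det B[V(p)]$ is just the product of the off-path diagonal entries (your telescoping identity, as literally written, should be the statement that the product of the two off-subpath diagonal products divided by $\prod_{\ell\neq k}b_{\ell\ell}$ equals $\det B[V(p(v_i\to v_j))]$, but the computation behind it is sound). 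For the converse, the paper writes down the explicit candidate inverse (\ref{inverse entries of cyclic matrix}) and checks $AB=I$ entry by entry; you instead derive row proportionalities off a single column from (\ref{cyclic property}), kill all non-bdsw cofactors that way, and then establish nonvanishing of the pattern entries via the inheritance of the inverse cyclic property by $A(\{k\})$ (together with Theorem \ref{det of a cycle matrix}) for the diagonal, and via irreducibility of $A^{-1}$ for the superdiagonal and south-west entries. The trade-off: the paper's construction hands you the closed-form inverse, which is exactly what Theorem \ref{bdsw Z matrix} later uses to read off signs, so with your proof those sign conclusions would have to be re-derived (e.g.\ from the cofactor expressions); in exchange, your argument is more structural, explains \emph{why} the zero pattern appears, and makes the role of irreducibility explicit. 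The one point you rightly flag as needing care --- that $A(\{k\})$ is inverse cyclic with parameters $d/a_{kk}$ and $c/a_{kk}$, including the boundary cases $k\in\{1,n\}$ where the new south-west entry is $a_{n2}=a_{n1}a_{12}/a_{11}$ or $a_{(n-1)1}=a_{(n-1)n}a_{n1}/a_{nn}$ --- is routine and is consistent with the paper's own remark that principal submatrices inherit the property, so it is a loose end rather than a gap.
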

\begin{proof}
Let us assume  that $A$ is a full matrix and has the inverse cyclic property. Since $A$ is nonsingular, by Theorem \ref{det of a cycle matrix}, $d-c \neq 0$. Define $B=(b_{ij})$, where
\begin{equation}\label{inverse entries of cyclic matrix}
    b_{ij}=\dfrac{1}{d-c}\begin{cases}

    -a_{ij}\Big(\prod\limits_{\substack{k=1 \\ k \neq i,j}}^{n} a_{kk}\Big), & \text{ if } j=i+1,\\ 

 -a_{ij}\Big(\prod\limits_{\substack{k=2 }}^{n-1} a_{kk}\Big), & \text{ if } i=n, j=1,\\ 
        
     \hspace{1.2cm}   \prod\limits_{\substack{k=1 \\ k \neq i}}^{n} a_{kk}, & \text{ if } i=j,\\
     \hspace{1.4cm} 0, &   \text{ otherwise}.
    \end{cases}
\end{equation}

We claim that $B=A^{-1}$. Since $A$ is a full matrix, it is clear that $B$ is a bdsw matrix. Now, for all $i \in \{1,2,\dotsc, n\}$, \begin{equation}\label{Product intries of AB}
    (AB)_{ij}=\begin{cases}
    a_{i1}b_{11} + a_{in}b_{n1}, & \text{ if } j=1,\\

    a_{i(j-1)}b_{(j-1)j} + a_{ij}b_{jj}, & \text{ if } j \neq 1.
\end{cases}
\end{equation}
We have, for $2 \leq j \leq n$, 
$$\frac{b_{(j-1)j}}{b_{jj}} =-\frac{a_{(j-1)j}}{a_{(j-1)(j-1)}}$$ and $$\frac{b_{n1}}{b_{11}}=-\frac{a_{n1}}{a_{nn}}.$$

Incorporating these in (\ref{Product intries of AB}), we obtain \begin{equation}\label{entries of product AB}
    (AB)_{ij}=\begin{cases}
    \big(a_{i1}-  \frac{a_{in}a_{n1}}{a_{nn}}\big)b_{11}, & \text{ for } j=1,\\
    \big( a_{ij} - \frac{a_{i(j-1)}a_{(j-1)j}}{a_{(j-1)(j-1)}} \big) b_{jj},  & \text{ for } j \neq 1.
\end{cases}
\end{equation}
\textbf{Case 1:} $i = j$.\\
First, let $i=j=1$. Then, using (\ref{equivalent cyclic property}), we have
\begin{eqnarray*}
    (AB)_{11}& = & \big(a_{11}-  \frac{a_{1n}a_{n1}}{a_{nn}}\big)b_{11} \\
    &=&  \big(a_{11}-  \frac{a_{12}a_{23} \dotsb a_{(n-1)n}a_{n1}}{a_{22} a_{33} \dotsb a_{(n-1)(n-1)}a_{nn}}\big) b_{11}\\
    &=& \frac{a_{11}a_{22} \dotsb a_{nn} -a_{12}a_{23} \dotsb a_{(n-1)n}a_{n1} }{a_{22}a_{33} \dotsb a_{nn}} b_{11}=1.
\end{eqnarray*}
Next, let $i =j \neq 1$. Then, using (\ref{equivalent cyclic property}) again, we have
\begin{eqnarray*}
    (AB)_{ii} & = & \big(a_{ii} - \frac{a_{i(i-1)}a_{(i-1)i}}{a_{(i-1)(i-1)}} \big) b_{ii} \\
    & = & \big(a_{ii} - \frac{a_{i(i+1)}a_{(i+1)(i+2)} \dotsb a_{(n-1)n} a_{n1} a_{12} \dotsb a_{(i-2)(i-1)}a_{(i-1)i}}{a_{(i+1)(i+1)} \dotsb a_{nn} a_{11} \dotsb a_{(i-2)(i-2)}a_{(i-1)(i-1)}} \big) b_{ii}\\
    & = &  \frac{a_{11}a_{22} \dotsb a_{nn} -a_{12}a_{23} \dotsb a_{(n-1)n}a_{n1} }{a_{11}a_{22} \dotsb a_{(i-1)(i-1)} a_{(i+1)(i+1)} \dotsb a_{nn}} b_{ii}=1.
\end{eqnarray*}
\textbf{Case 2:} $i \neq j$.\\
First, let $j=1$. We have $a_{i1}=\dfrac{a_{in}a_{n1}}{a_{nn}}$, so that $(AB)_{i1}=0$. \\
Next, we consider $j \neq 1$. We have two cases. First, let $i <j$. Then
$$ \frac{a_{i(j-1)}a_{(j-1)j}}{a_{(j-1)(j-1)}}=a_{ij} .$$ Also for $j<i<n$, 
\begin{eqnarray*}
    \frac{a_{i(j-1)}a_{(j-1)j}}{a_{(j-1)(j-1)}}& =& \frac{a_{i(i+1)}a_{(i+1)(i+2)} \dotsb a_{n1}a_{12} \dotsb a_{(j-2)(j-1)}a_{(j-1)j}}{a_{(i+1)(i+1)}a_{(i+2)(i+2)} \dotsb a_{nn}a_{11} \dotsb a_{(j-2)(j-2)}a_{(j-1)(j-1)}} \\
    &=& a_{ij},
\end{eqnarray*}
and for $j <i=n$,
\begin{eqnarray*}
   \frac{a_{n(j-1)}a_{(j-1)j}}{a_{(j-1)(j-1)}}& =& \frac{a_{n1}a_{12} \dotsb a_{(j-2)(j-1)}a_{(j-1)j}}{a_{11} a_{22}\dotsb a_{(j-2)(j-2)}a_{(j-1)(j-1)}}\\
    &=& a_{nj}.
\end{eqnarray*}
Then from (\ref{entries of product AB}), it follows that $(AB)_{ij}=0,$ in this case, too. This completes the proof that $AB=I$. \\
Conversely, suppose that $B=A^{-1}$ is a bdsw matrix. Then from Lemma \ref{inverse of bdsw is full matrix}, $A$ is a full matrix and so, in particular all the diagonal entries of $A$ are nonzero. Now, using Lemma \ref{inv det}, we will show that $A$ has the inverse cyclic property. This is equivalent to proving that the following are true:
\begin{equation}\label{upper part}
    \det A\begin{pmatrix}
        i &k\\
        k &j
    \end{pmatrix}=0, \text{ for } i<k<j,
\end{equation} 
\begin{equation}\label{lower part}
    \det A\begin{pmatrix}
        i &n\\
        j &n
    \end{pmatrix}=0, \text{ for } j<i \neq n
\end{equation}
and
\begin{equation}\label{last row}
    \det A\begin{pmatrix}
        1&n\\
        1 &j
    \end{pmatrix}=0, \text{ for } j<i=n.
\end{equation}
In view of Lemma \ref{inv det}, it suffices to show that the following hold, for the matrix $B$:
\begin{equation}\label{equivalent upper part}
 \det B\begin{pmatrix}
        1,2, \dots, k-1,k+1, \dots , j-1 ,j+1, \dots, n\\
        1,2, \dots, i-1,i+1, \dots, k-1,k+1 \dots, n
    \end{pmatrix}=0, \text{ for } i<k<j,
\end{equation}

\begin{equation}\label{equivalent lower part}
    \det B\begin{pmatrix}
        1,2, \dots, j-1,j+1, \dots , n-1 \\
        1,2, \dots, i-1,i+1, \dots, n-1
    \end{pmatrix}=0, \text{ for } j<i \neq n,
\end{equation}

\begin{equation}\label{equivalent last row}
     \det B\begin{pmatrix}
        2,3, \dots, j-1,j+1, \dots , n\\
        2,3, \dots, \dots ,  n-1
    \end{pmatrix}=0, \text{ for }j<i=n.
\end{equation}

Consider the case $i <k<j$. Let $E_1$ be the submatrix of $B$ of order $n-2$, corresponding to the left hand side of (\ref{equivalent upper part}). Let $C_{1}$ be the submatrix of $E_1$ obtained by taking the {\it first} $k-1$ rows and {\it all} the $n-2$ columns. Consider the submatrix $F_{1}$ of $C_{1}$, obtained by taking all the $k-1$ rows and the {\it first} $k-2$ columns of $C_{1}$. The rank of $F_1$ does not exceed $k-2.$ Since $B$ is a bdsw matrix, and since the entries along all the other columns of $C_{1}$, not being in the submatrix $F_{1}$, are zero, 
it follows that the $k-1$ rows of $C_{1}$ are linearly dependent. Thus, $\det E_1=0,$ showing that (\ref{equivalent upper part}) holds.

Next, let $j<i \neq n$. Suppose that $E_2$ is the submatrix of $B$ of order $n-2$, corresponding to the left hand side of (\ref{equivalent lower part}). Let $C_{2}$ be the submatrix of $E_2$ obtained by taking the {\it last} $n-i$ rows and {\it all} the $n-2$ columns. Consider the submatrix $F_{2}$ of $C_{2}$, obtained by taking {\it all} the $n-i$ rows and the {\it last} $n-i-1$ columns of $C_{2}$. The rank of $F_2$ does not exceed $n-i-1.$ Moreover, since the entries along all the other columns of $C_{2}$, not being in the submatrix $F_{2}$, are zero (since $B$ is a bdsw matrix), it follows that the $n-i$ rows of $C_{2}$ are linearly dependent. Thus, $\det E_2=0,$ showing that (\ref{equivalent lower part}) holds.

Finally, let $j<i=n$. Denote by $E_3$, the submatrix of $B$ of order $n-2$, corresponding to the left hand side of (\ref{equivalent last row}). Again, since $B$ is a bdsw matrix, all the entries in its {\it last} row except the first and the last entries, are zero. Thus, all the entries in the {\it last row} of $E_3$ are zero, so that $\det E_3=0$. Thus (\ref{equivalent last row}) holds.\\
We have shown that $A$ has the inverse cyclic property, completing the proof.
\end{proof}

\begin{example}\label{example for the main theorem}
     Let $A=\begin{pmatrix}
        ~~1 &-1&-1\\
        -2&~~1 &~~1\\
        ~~2 &-2&-1
    \end{pmatrix}$.
    Then $A$ satisfies the inverse cyclic property. Also, 
 $$A^{-1}=\begin{pmatrix}
        -1&-1&~~0\\
        ~~0&-1&-1\\
        -2&~~0&~~1
    \end{pmatrix},$$ is a bdsw matrix. Consider the matrix $C,$ obtained by changing $A$ in precisely one entry ($a_{32}$), given by $$C=\begin{pmatrix}
         ~~1 &-1&-1\\
        -2&~~1 &~~1\\
        ~~2 &~~2&-1
    \end{pmatrix}.$$ Since $c_{32} \neq \dfrac{c_{31}c_{12}}{c_{11}}$, the matrix $C$ does not possess the inverse cyclic property. Here, 
 $$C^{-1}=\frac{1}{3}\begin{pmatrix}
       -3&-3&~~0\\
       ~~0 &~~1 &~~1\\
       -6&-4&-1
    \end{pmatrix}$$
is not a bdsw matrix. 
\end{example}

\begin{remark}
It is easy to verify that any principal submatrix of an inverse cyclic matrix inherits the property. Also, if $A$ is a nonsingular matrix having the inverse cyclic property and $B=A^{-1},$ then for all $ 1\leq i \leq n-1,$ $$a_{ii}b_{ii}=a_{(i+1)(i+1)}b_{(i+1)(i+1)}=\dfrac{d}{d-c}.$$ 
\end{remark}

\section{Inverse $Z$-matrices with the bdsw structure}
In this section, we present applications of Theorem \ref{Cycle matrix} to the two classes of nonsingular $Z$-matrices under study here, viz., inverse $M$-matrices and inverse $N$-matrices.

\begin{theorem}\label{bdsw Z matrix}
For $A \in \mathbf{M}_n(\mathbb{R}),$ we have the following:
\begin{enumerate}
    \item $A$ is an inverse $M$-matrix such that $A^{-1}$ possesses the bdsw structure if and only if $A>0$, $d-c>0$ and $A$ has the inverse cyclic property.
    \item Let $A$ be an even order matrix. Then $A$ is an inverse $N$-matrix, $A^{-1}$ has the bdsw pattern if and only if $A<0$, has the inverse cyclic property with $d-c <0$.
    \item Let $A$ be an odd order matrix. Then $A$ is an inverse $N$-matrix such that $A^{-1}$ is endowed with the bdsw structure if and only if $A<0$ has the inverse cyclic property and $d-c >0$.
\end{enumerate}
\end{theorem}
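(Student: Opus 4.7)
The plan is to use Theorem \ref{Cycle matrix} as the main engine: it converts the bdsw hypothesis on $A^{-1}$ into the condition that $A$ is full and inverse cyclic, and it also supplies the explicit formula (\ref{inverse entries of cyclic matrix}) for $B = A^{-1}$ in terms of the entries of $A$. Once this is in place, each part of the theorem reduces to checking the signs of the (few) nonzero entries of $B$, and Theorems \ref{m} and \ref{j} translate those sign conditions into the $M$-matrix or $N$-matrix conclusion, since in both characterisations the only extra requirement beyond $A^{-1}$ being a $Z$-matrix is a sign condition on $(A^{-1})^{-1} = A$.

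For part (1), in the forward direction Theorem \ref{m} yields $A = (A^{-1})^{-1} \geq 0$, Lemma \ref{inverse of bdsw is full matrix} promotes this to $A > 0$, and Theorem \ref{Cycle matrix} hands over the inverse cyclic property. The sign $d - c > 0$ is then read off from the diagonal of $B$: formula (\ref{inverse entries of cyclic matrix}) gives $b_{ii} = \left(\prod_{k \neq i} a_{kk}\right)/(d-c)$, and since the diagonal of an $M$-matrix is positive and the numerator is positive thanks to $A > 0$, we must have $d-c>0$. In the reverse direction, from $A > 0$, the inverse cyclic property and $d - c > 0$, Theorem \ref{Cycle matrix} produces $B$ in bdsw form, and (\ref{inverse entries of cyclic matrix}) immediately yields $b_{ii} > 0$ together with $b_{i(i+1)} < 0$ and $b_{n1} < 0$, while all other off-diagonal entries vanish; thus $B$ is a $Z$-matrix whose inverse is positive, hence an $M$-matrix by Theorem \ref{m}.

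Parts (2) and (3) follow the same template, but the sign analysis must now track the parity of $n$. If $A^{-1}$ is an $N$-matrix then Theorem \ref{j} gives $A < 0$, Lemma \ref{inverse of bdsw is full matrix} still applies, and Theorem \ref{Cycle matrix} supplies the inverse cyclic property. The diagonal entries $b_{ii} = \left(\prod_{k \neq i} a_{kk}\right)/(d-c)$ now have a numerator built from $n-1$ strictly negative factors, whose sign flips with the parity of $n$; they must nevertheless stay positive, because every $1 \times 1$ proper principal submatrix of an $N$-matrix is an invertible $M$-matrix. This pins down $d - c < 0$ when $n$ is even and $d - c > 0$ when $n$ is odd. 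The converses in (2) and (3) run in parallel: beginning from $A < 0$, the inverse cyclic property and the appropriate sign of $d - c$, formula (\ref{inverse entries of cyclic matrix}) shows that the nonzero entries of $B$ carry the signs of a $Z$-matrix, and $B^{-1} = A < 0$ then triggers Theorem \ref{j} to yield the $N$-matrix conclusion.

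The main obstacle, though essentially routine, is the parity bookkeeping for the two off-diagonal nonzero entries $b_{i(i+1)}$ and $b_{n1}$: their numerators pair $n-2$ diagonal factors of $A$ with a single strictly negative off-diagonal entry, so their overall signs also swing with the parity of $n$, and they must remain nonpositive in tandem with $b_{ii}>0$. This joint consistency is exactly what forces the asymmetry between the even-$n$ and odd-$n$ statements. Theorem \ref{det of a cycle matrix} offers a useful sanity check along the way: $\det A = (d-c)^{n-1}/d^{n-2}$ evaluates to a negative number in both (2) and (3), matching the known sign of the determinant of an $N$-matrix.
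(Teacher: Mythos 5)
Your proposal is correct and follows essentially the same route as the paper: both use Theorem \ref{Cycle matrix} together with the explicit formula (\ref{inverse entries of cyclic matrix}) for the entries of $A^{-1}$, then read off the sign conditions and invoke Theorems \ref{m} and \ref{j}. In fact you carry out the parity bookkeeping for parts (2) and (3) in more detail than the paper, which dismisses those cases with a one-line remark.
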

\begin{proof} 
(1) Let $A$ be an inverse $M$-matrix such that $A^{-1}$ has the bdsw structure. Since $A^{-1}$ is an $M$-matrix, then all the diagonal entries of $A$ are nonzero and $A=(A^{-1})^{-1} \geq 0$. Moreover, the bdsw structure of $A^{-1}$, implies that $A$ is a full matrix i.e., $A>0$ and $A$ has the inverse cyclic property (by Theorem \ref{Cycle matrix}). Also, the entries of $A^{-1}=:B=(b_{ij})$ are given by formula (\ref{inverse entries of cyclic matrix}).

Since $B$ is an $M$-matrix, i.e., $b_{ij} \leq 0$, for $i \neq j,$ and the entries of $A$ are positive, it follows from (\ref{inverse entries of cyclic matrix}) that $d-c >0$.

Conversely, assume that $A>0$ has the inverse cyclic property and $d-c >0$. Then $A$ is nonsingular and by Theorem \ref{Cycle matrix}, $A^{-1}$ is a bdsw matrix. To show that $A^{-1}$ also an $M$-matrix, it is enough to show that $A^{-1}$ is a $Z$-matrix, since $A>0$. However, this is easy to see, by the formula for $A^{-1}$ given in (\ref{inverse entries of cyclic matrix}).

Items (2) and (3) follow in an entirely similar manner, with the additional observation that the sign of the products appearing in the inverse formula depend on the odd-even parity of the number of factors.
\end{proof}

The following examples illustrate the above Theorem.
\begin{example}
    Consider the matrix $$A=\begin{pmatrix}
        4&4&8&4&4\\
        1&2&4&2&2\\
        1&1&4&2&2\\
        2&2&4&4&4\\
        2&2&4&2&4
    \end{pmatrix}.$$ It may be verified that $A$ has the inverse cyclic property and $d-c>0$. Also $$A^{-1}=\frac{1}{4}\begin{pmatrix}
         ~~2& -4&  ~~0&  ~~0&  ~~0\\
 ~~0&  ~~4& -4&  ~~0&  ~~0\\
 ~~0&  ~~0&  ~~2& -1&  ~~0\\
 ~~0&  ~~0&  ~~0&  ~~2& -2\\
-1&  ~~0&  ~~0&  ~~0&  ~~2
    \end{pmatrix},$$ a bdsw $M$-matrix.
\end{example}

\begin{example}
    Consider the (even order) matrix $$A=\begin{pmatrix}
        -2&-2&-4&-8\\
        -4&-1&-2&-4\\
        -2&-2&-1&-2\\
        -2&-2&-4&-2
    \end{pmatrix}.$$ Then, $A$ has the inverse cyclic property and $d-c<0$. Its inverse is given by 
    $$A^{-1}=\frac{1}{6}\begin{pmatrix}
        ~~1 & -2 &  ~~0 &  ~~0\\
~~0 &  ~~2 & -4&  ~~0\\
~~0 &  ~~0&  ~~2& -2\\
-1&  ~~0&  ~~0&  ~~1
    \end{pmatrix},$$ 
    which is a bdsw $N$-matrix.
\end{example}

\begin{example}
     The odd order matrix $$A=\begin{pmatrix}
        -2&-2&-4&-8&-16\\
        -8&-1&-2&-4&-8\\
        -4&-4&-1&-2&-4\\
        -2&-2&-4&-1&-2\\
        -2&-2&-4&-8&-2
    \end{pmatrix},$$ is an inverse cyclic matrix, with $d-c>0$. Here, $$A^{-1}=\frac{1}{14}\begin{pmatrix}
         ~~1& -2&  ~~0&  ~~0& ~~0\\
~~ 0&  ~~2& -4&  ~~0&  ~~0\\
~~0&  ~~0&  ~~2& -4&  ~~0\\
~~0&  ~~0& ~~0&  ~~2& -2\\
-1&  ~~0&  ~~0&  ~~0&  ~~1
    \end{pmatrix},$$ is a bdsw $N$-matrix.

\end{example}

\begin{remark}
The following examples illustrate that the odd-even parity in the order of the matrix is indispensable in Theorem \ref{bdsw Z matrix}. The matrix $$A=\begin{pmatrix}
        -2 &-2 &-2 &-2\\
        -1&-2&-2&-2\\
        -1&-1&-2&-2\\
        -1&-1&-1&-2
    \end{pmatrix}$$ of even order has the inverse cyclic property with $d-c >0$. While $$A^{-1}=\frac{1}{2}\begin{pmatrix}
        -2&  ~~2&  ~~0 &  ~~0\\
~~0& -2&  ~~2&  ~~0\\
~~0&  ~~0& -2&  ~~2\\
~~1&  ~~0&  ~~0& -2
    \end{pmatrix}$$ is a bdsw matrix, it is not an $N$-matrix. On the other hand, if $$A=\begin{pmatrix}
        -2&-2&-2&-2&-2\\
        -1&-2&-2&-2&-2\\
        -1&-1&-2&-2&-2\\
        -1&-1&-1&-2&-2\\
        -1&-1&-1&-1&-2
    \end{pmatrix},$$ then $A$ is of odd order, $d-c <0$ and has the inverse cyclic property. Here $$A^{-1}=\frac{1}{2}\begin{pmatrix}
        -2&  ~~2&  ~~0&  ~~0&  ~~0\\
~~0& -2&  ~~2&  ~~0&  ~~0\\
~~0&  ~~0& -2&  ~~2&  ~~0\\
~~0&  ~~0&  ~~0& -2&  ~~2\\
~~1&  ~~0&  ~~0&  ~~0& -2
  \end{pmatrix},$$ is not even a $Z$-matrix.   
\end{remark}

We conclude this article with a result for inverse $N$-matrices, analogous to Theorem \ref{cycle permutation}.

\begin{theorem}\label{polyn}
    Let $Z$ be defined as in Theorem \ref{cycle permutation}. Let $\alpha_1, \alpha_2, \dotsc, \alpha_n$ be nonpositive numbers and $$A:=p(Z)=\alpha_1I +\alpha_2Z+ \dotsb +\alpha_nZ^{n-1}.$$ Then the following are equivalent:
    \begin{enumerate}
        \item $A^{-1}$ is an $N$-matrix with $\mathcal{D}(A^{-1})$ being the simple $n$-cycle $v_1 \rightarrow v_2 \rightarrow \dots \rightarrow v_n \rightarrow v_1$, with loops.
        \item $A<0$ satisfies 
        \begin{enumerate}
            \item $\alpha_2 <\alpha_1 <0$,
            \item $\alpha_r =(\alpha_2)^{r-1}/(\alpha_1)^{r-2},$ for $r=3, \dotsc,n$.
        \end{enumerate}
    \end{enumerate}
\end{theorem}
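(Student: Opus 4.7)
The plan is to apply Theorem~\ref{bdsw Z matrix} to the matrix $A = p(Z)$, after identifying it as a circulant. Since $Z$ is the standard cyclic shift permutation, $A = \alpha_1 I + \alpha_2 Z + \cdots + \alpha_n Z^{n-1}$ is the circulant whose first row is $(\alpha_1, \alpha_2, \ldots, \alpha_n)$; explicitly, $a_{ij} = \alpha_r$ with $r = ((j - i) \bmod n) + 1$. In particular, every diagonal entry equals $\alpha_1$, every superdiagonal entry and the $(n,1)$ entry equals $\alpha_2$, and consequently $d = \alpha_1^n$ and $c = \alpha_2^n$.

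The central step is to show that, for this circulant, the inverse cyclic property is equivalent to the system $\alpha_r = \alpha_2^{r-1}/\alpha_1^{r-2}$ for $r = 3, \ldots, n$ (with $\alpha_1, \alpha_2 \neq 0$). I would verify this clause by clause from (\ref{equivalent cyclic property}): in every clause the numerator is the product of entries along the unique directed-cycle path from $v_i$ to $v_j$ (each such entry equal to $\alpha_2$), and the denominator is the product of diagonal entries (each $\alpha_1$) at the intermediate vertices. Counting factors yields exactly $\alpha_2^{r-1}/\alpha_1^{r-2}$, where $r - 1$ is the path length; since $a_{ij} = \alpha_r$ for the same $r$, the three clauses of (\ref{equivalent cyclic property}) collapse to the single family of relations in (2)(b).

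Next I would translate the sign conditions supplied by Theorem~\ref{bdsw Z matrix}. If $\alpha_1, \alpha_2 < 0$, the relation in (2)(b) gives $\mathrm{sgn}(\alpha_r) = (-1)^{r-1}/(-1)^{r-2} = -1$ for every $r$, so $A < 0$; conversely, $A < 0$ forces $\alpha_1, \alpha_2 < 0$. For the parity-dependent condition on $d - c$: when $n$ is even, $d, c > 0$ and $d - c < 0 \Leftrightarrow |\alpha_1|^n < |\alpha_2|^n$; when $n$ is odd, $d, c < 0$ and $d - c > 0 \Leftrightarrow |\alpha_1|^n < |\alpha_2|^n$. In both parities this reduces to $\alpha_2 < \alpha_1 < 0$, recovering (2)(a) uniformly.

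Chaining everything: (1) holds $\Leftrightarrow$ $A^{-1}$ is a bdsw $N$-matrix $\Leftrightarrow$ (by Theorem~\ref{bdsw Z matrix}) $A < 0$, has the inverse cyclic property, and $d - c$ has the correct parity-dependent sign $\Leftrightarrow$ (by the two translations above) (2) holds. The main source of tedium, rather than a genuine obstacle, is the three-case unpacking of the inverse cyclic identity for a circulant and the parity tracking in the $d - c$ calculation; each case of the former reduces to the same telescoping product, and the latter is uniform across parities once rewritten in absolute values.
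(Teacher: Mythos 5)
Your proposal is correct and follows essentially the same route as the paper's own proof: both reduce the statement to Theorem~\ref{bdsw Z matrix} via the circulant identification of $p(Z)$, the observation that the inverse cyclic property collapses to the relations $\alpha_r = \alpha_2^{r-1}/\alpha_1^{r-2}$, and the parity-dependent sign of $d-c$ translating to $\alpha_2 < \alpha_1 < 0$. You are in fact more explicit than the paper about the computations $d = \alpha_1^n$, $c = \alpha_2^n$ and the clause-by-clause verification, which the paper leaves as remarks.
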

\begin{proof}
    It may be verified that $$A=\begin{pmatrix}
        \alpha_1 & \alpha_2 & \dots &\alpha_{n-1} &\alpha_n\\
        \alpha_n & \alpha_1 & \alpha_2 & \dots & \alpha_{n-1}\\
        \vdots & \alpha_n & \alpha_1 & \ddots & \vdots\\
        \alpha_3 & \vdots & \ddots & \alpha_1 & \alpha_2\\
        \alpha_2 & \alpha_3 & \dots & \alpha_n & \alpha_1
    \end{pmatrix}.$$
Suppose that (1) holds. Then $A^{-1}$ is an $N$-matrix with the bdsw structure. By Theorem \ref{bdsw Z matrix}, we then have $A<0$ and $A$ possesses the inverse cyclic property. Further, $d-c <0$, if $n$ even and $d-c >0$, for $n$ odd. Moreover, the inverse cyclic property of $A$ yields (b). Finally, the sign of $d-c$ for the odd-even parity of the order of the matrix, yields the inequality $\alpha_2 < \alpha_1 <0$.

Conversely, suppose that $A<0$ satisfies the conditions $(a)$ and $(b)$. Then, $A$ is a full matrix with the inverse cyclic property. The inequalities $\alpha_2 <\alpha_1 <0$ imply that $d-c <0$, if $n$ even and $d-c >0$, for $n$ odd. By Theorem \ref{bdsw Z matrix} again, $A^{-1}$ is an $N$-matrix, with the bdsw structure, so that $\mathcal{D}(A^{-1})$ is a simple cycle $v_1 \rightarrow v_2 \rightarrow \dots \rightarrow v_n \rightarrow v_1$, with loops, completing the proof.
\end{proof}

\section{Concluding remarks}
We propose two new classes of nonsingular matrices and establish a relationship between them. We obtain consequences for two subclasses of inverse $Z$-matrices. It is natural to ask what versions of
these results apply to other inverse Z-matrices, in general and the classes of $N_0$-matrices and $F_0$-matrices, in particular. This is a problem for a future study.\\

\bibliographystyle{amsalpha}

\end{document}